\newtheorem{theorem}{Theorem}[section]
\newtheorem{lemma}[theorem]{Lemma}
\newtheorem{proposition}[theorem]{Proposition}
\newtheorem{corollary}[theorem]{Corollary}
\theoremstyle{definition}
\numberwithin{equation}{section}
\def\imod#1{\allowbreak\mkern5mu({\operator@font mod}\,\,#1)}
\begin{document}

\title[Mock theta double sums]
{Mock theta double sums} 
 
\author{Jeremy Lovejoy}

\author{Robert Osburn}

\address{CNRS, LIAFA, Universit{\'e} Denis Diderot - Paris 7, Case 7014, 75205 Paris Cedex 13, FRANCE}

\address{School of Mathematical Sciences, University College Dublin, Belfield, Dublin 4, Ireland}

\address{IH{\'E}S, Le Bois-Marie, 35, route de Chartres, F-91440 Bures-sur-Yvette, FRANCE}

\email{lovejoy@liafa.jussieu.fr}

\email{robert.osburn@ucd.ie, osburn@ihes.fr}

\subjclass[2010]{Primary: 33D15; Secondary: 11F37}
\keywords{Bailey pairs, mock theta functions, identities}

\date{\today}

\begin{abstract}
We prove a general result on Bailey pairs and show that two Bailey pairs of Bringmann and Kane are special cases.  We also show how to use a change of base formula to pass from the pairs of Bringmann and Kane to pairs used by Andrews in his study of Ramanujan's seventh order mock theta functions.  We derive several more Bailey pairs of a similar type and use these to construct a number of new $q$-hypergeometric double sums which are mock theta functions.  Finally, we prove identities between some of these mock theta double sums and classical mock theta functions. 
\end{abstract}
 
\maketitle

\section{Introduction}
\subsection{Bailey pairs}
A \emph{Bailey pair} relative to $a$ is a pair of sequences $(\alpha_n,\beta_n)_{n \geq 0}$ satisfying
\begin{equation} \label{pairdef}
\beta_n = \sum_{k=0}^n \frac{\alpha_k}{(q)_{n-k}(aq)_{n+k}}, 
\end{equation} 
or equivalently 
\begin{equation} \label{pairdefbis}
\alpha_n = (1-aq^{2n}) \sum_{j=0}^n \frac{(aq)_{n+j-1}(-1)^{n-j}q^{\binom{n-j}{2}}}{(q)_{n-j}}\beta_j.
\end{equation}
Here we have used the standard $q$-hypergeometric notation, 
\begin{equation*}
(a)_n = (a;q)_n = \prod_{k=1}^{n} (1-aq^{k-1}),
\end{equation*}
valid for $n \in \mathbb{N} \cup \{\infty\}$.   The \emph{Bailey lemma} says that if $(\alpha_n,\beta_n)$ is a Bailey pair relative to $a$, then so is $(\alpha_n',\beta_n')$, where 

\begin{equation} \label{alphaprimedef}
\alpha'_n = \frac{(\rho_1)_n(\rho_2)_n(aq/\rho_1 \rho_2)^n}{(aq/\rho_1)_n(aq/\rho_2)_n}\alpha_n
\end{equation} 

\noindent and

\begin{equation} \label{betaprimedef}
\beta'_n = \sum_{k=0}^n\frac{(\rho_1)_k(\rho_2)_k(aq/\rho_1 \rho_2)_{n-k} (aq/\rho_1 \rho_2)^k}{(aq/\rho_1)_n(aq/\rho_2)_n(q)_{n-k}} \beta_k.
\end{equation}

A useful limiting form of the Bailey lemma is found by putting \eqref{alphaprimedef} and \eqref{betaprimedef} into \eqref{pairdef} and letting $n \to \infty$, giving

\begin{equation} \label{limitBailey}
\sum_{n \geq 0} (\rho_1)_n(\rho_2)_n (aq/\rho_1 \rho_2)^n \beta_n = \frac{(aq/\rho_1)_{\infty}(aq/\rho_2)_{\infty}}{(aq)_{\infty}(aq/\rho_1 \rho_2)_{\infty}} \sum_{n \geq 0} \frac{(\rho_1)_n(\rho_2)_n(aq/\rho_1 \rho_2)^n }{(aq/\rho_1)_n(aq/\rho_2)_n}\alpha_n.
\end{equation}
For more on Bailey pairs and the Bailey lemma, see \cite{An1,An2,war}.

This paper has its origins in the following two Bailey pairs discovered by Bringmann and Kane \cite{Br-Ka1}.  First, $(a_n,b_n)$ is a Bailey pair relative to $1$, where

\begin{equation} \label{a2n}
a_{2n} = (1-q^{4n})q^{2n^2-2n}\sum_{j=-n}^{n-1}q^{-2j^2-2j},
\end{equation}

\begin{equation} \label{a2n+1}
a_{2n+1} = -(1-q^{4n+2})q^{2n^2}\sum_{j=-n}^{n} q^{-2j^2},
\end{equation}

\noindent and

\begin{equation} \label{bn}
b_n = \frac{(-1)^n(q;q^2)_{n-1}}{(q)_{2n-1}} \chi(\text{$n \neq 0$}) \footnote{As usual, $\chi(X) = 1$ if $X$ is true and $0$ if $X$ is false.},
\end{equation}

\noindent and second, $(\alpha_n,\beta_n)$ is a Bailey pair relative to $q$, where

\begin{equation} \label{alpha2n}
\alpha_{2n} = \frac{1}{1-q}\left(q^{2n^2+2n}\sum_{j=-n}^{n-1}q^{-2j^2-2j} + q^{2n^2}\sum_{j=-n}^{n} q^{-2j^2}\right),
\end{equation}

\begin{equation} \label{alpha2n+1}
\alpha_{2n+1} = -\frac{1}{1-q}\left(q^{2n^2+4n+2}\sum_{j=-n}^{n} q^{-2j^2} + q^{2n^2+2n}\sum_{j=-n-1}^n q^{-2j^2-2j}\right),
\end{equation}

\noindent and 

\begin{equation} \label{betan}
\beta_n = \frac{(-1)^n(q;q^2)_n}{(q)_{2n+1}}.
\end{equation}

These are highly reminiscent of three Bailey pairs discovered by Andrews \cite{An3} in his study of Ramanujan's seventh order mock theta functions.  Namely, he showed that $(\mathcal{A}_n(0),\mathcal{B}_n(0))$ and $(\mathcal{A}_n(1),\mathcal{B}_n(1))$ form Bailey pairs relative to $1$, where 
\begin{equation} \label{mathcalA2n(0)}
\mathcal{A}_{2n}(0) = q^{3n^2+n}\sum_{j=-n}^{n}q^{-j^2} - q^{3n^2-n}\sum_{j=-n+1}^{n-1} q^{-j^2},
\end{equation}

\begin{equation} \label{mathcalA2n+1(0)}
\mathcal{A}_{2n+1}(0) = -q^{3n^2+4n+1}\sum_{j=-n-1}^{n} q^{-j^2-j} + q^{3n^2+2n}\sum_{j=-n}^{n-1} q^{-j^2-j},
\end{equation}

\begin{equation} \label{mathcalBn(0)}
\mathcal{B}_n(0) = \frac{1}{(q^{n+1})_{n}},
\end{equation}

\begin{equation} \label{mathcalA2n(1)}
\mathcal{A}_{2n}(1) = - (1-q^{4n})q^{3n^2-2n}\sum_{j=-n}^{n-1}q^{-j^2-j},
\end{equation}
\begin{equation} \label{mathcalA2n+1(1)}
\mathcal{A}_{2n+1}(1) = (1-q^{4n+2})q^{3n^2+n}\sum_{j=-n}^{n}q^{-j^2},
\end{equation}
and
\begin{equation} \label{mathcalBn(1)}
\mathcal{B}_n(1) = \frac{1}{(q^n)_n}\chi(n \neq 0),
\end{equation}
while $(\mathcal{A}_n(2),\mathcal{B}_n(2))$ is a Bailey pair relative to $q$, where

\begin{equation} \label{mathcalA2n(2)}
\mathcal{A}_{2n}(2) = \frac{1}{1-q}\left(q^{3n^2+2n}\sum_{j=-n}^{n-1}q^{-j^2-j} + q^{3n^2+n}\sum_{j=-n}^{n} q^{-j^2}\right),
\end{equation}

\begin{equation} \label{mathcalA2n+1(2)}
\mathcal{A}_{2n+1}(2) = -\frac{1}{1-q}\left(q^{3n^2+5n+2}\sum_{j=-n}^{n} q^{-j^2} + q^{3n^2+4n+1}\sum_{j=-n-1}^n q^{-j^2-j}\right),
\end{equation}

\noindent and 

\begin{equation} \label{mathcalBn(2)}
\mathcal{B}_n(2) = \frac{1}{(q^{n+1})_{n+1}}.
\end{equation}

Our first goal in this paper is to prove the following results, which will lead to more Bailey pairs like those of Bringmann-Kane and Andrews.  Note that Theorem \ref{main3} is simply an application of Theorem \ref{main1} followed by an application of Theorem \ref{main2}.  
\begin{theorem} \label{main1}
If $(\alpha_n,\beta_n)$ is a Bailey pair relative to $1$ with $\alpha_0 = \beta_0=1$, then $(\alpha'_n,\beta'_n)$ is also a Bailey pair relative to $1$, where $\alpha'_0=\beta'_0 = 0$,
\begin{equation} \label{aeven}
\alpha'_{2n} = -(1-q^{4n})q^{2n^2-2n}\sum_{j=0}^{n-1}q^{-2j^2-2j}\alpha_{2j+1},
\end{equation}
\begin{equation} \label{aodd}
\alpha'_{2n+1} = -(1-q^{4n+2})q^{2n^2}\sum_{j=0}^{n}q^{-2j^2}\alpha_{2j},
\end{equation}
and for $n \geq 1$,
\begin{equation} \label{b1}
\beta'_n = -\frac{\beta_{n-1}}{1-q^{2n-1}}.
\end{equation}
\end{theorem}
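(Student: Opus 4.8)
The plan is to verify directly that the sequences defined by \eqref{aeven}, \eqref{aodd}, \eqref{b1} together with $\alpha'_0=\beta'_0=0$ satisfy the defining relation \eqref{pairdef} with $a=1$; since \eqref{pairdef} and \eqref{pairdefbis} are equivalent, this is exactly the statement that $(\alpha'_n,\beta'_n)$ is a Bailey pair relative to $1$. The case $n=0$ requires nothing: $\beta'_0=0$ by definition, \eqref{pairdef} at $n=0$ forces $\alpha'_0=\beta'_0=0$, and this agrees with \eqref{aeven}, whose factor $1-q^{4n}$ vanishes at $n=0$. So fix $n\geq 1$. By \eqref{b1} the claim becomes $\sum_{k=0}^{n}\alpha'_k/\big((q)_{n-k}(q)_{n+k}\big)=-\beta_{n-1}/(1-q^{2n-1})$, and expanding $\beta_{n-1}$ through \eqref{pairdef} for the original pair turns this into the identity
\[
\sum_{k=0}^{n}\frac{\alpha'_k}{(q)_{n-k}(q)_{n+k}}=-\frac{1}{1-q^{2n-1}}\sum_{i=0}^{n-1}\frac{\alpha_i}{(q)_{n-1-i}(q)_{n-1+i}},
\]
to be read, after \eqref{aeven} and \eqref{aodd} are inserted on the left, as an identity in $q$ and in the formal variables $\alpha_0,\alpha_1,\dots$. (The hypothesis $\alpha_0=\beta_0=1$ is not in fact needed for this, but it is harmless and holds in all our applications.)

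The next step is to split the left-hand sum by the parity of $k$, insert \eqref{aeven} and \eqref{aodd}, and interchange the order of summation. Because $\alpha'_{2m}$ is built only from the $\alpha_{2j+1}$ and $\alpha'_{2m+1}$ only from the $\alpha_{2j}$, one can read off the coefficient of $\alpha_i$ on the left: for $i=2j+1$ it is $-q^{-2j^2-2j}$ times $\sum_{m\geq j+1}(1-q^{4m})q^{2m^2-2m}/\big((q)_{n-2m}(q)_{n+2m}\big)$, and for $i=2j$ it is $-q^{-2j^2}$ times $\sum_{m\geq j}(1-q^{4m+2})q^{2m^2}/\big((q)_{n-2m-1}(q)_{n+2m+1}\big)$. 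The task is to evaluate these two inner sums and check that the outcome is $-1/\big((1-q^{2n-1})(q)_{n-1-i}(q)_{n-1+i}\big)$, the coefficient of $\alpha_i$ on the right.

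The crux is that \emph{both inner sums telescope}. With the convention $1/(q)_r:=0$ for $r<0$, one verifies the two identities
\[
\frac{(1-q^{4m})q^{2m^2-2m}}{(q)_{n-2m}(q)_{n+2m}}=\frac{1}{1-q^{2n-1}}\left(\frac{q^{2m^2-2m}}{(q)_{n-2m}(q)_{n+2m-2}}-\frac{q^{2m^2+2m}}{(q)_{n-2m-2}(q)_{n+2m}}\right),
\]
\[
\frac{(1-q^{4m+2})q^{2m^2}}{(q)_{n-2m-1}(q)_{n+2m+1}}=\frac{1}{1-q^{2n-1}}\left(\frac{q^{2m^2}}{(q)_{n-2m-1}(q)_{n+2m-1}}-\frac{q^{2m^2+4m+2}}{(q)_{n-2m-3}(q)_{n+2m+1}}\right);
\]
clearing the Pochhammer denominators, each reduces to a one-line check that a short expression in $q$ equals $(1-q^{4m})(1-q^{2n-1})$, respectively $(1-q^{4m+2})(1-q^{2n-1})$. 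Summing the first over $m\geq j+1$ and the second over $m\geq j$, the right-hand sides telescope (only finitely many terms are nonzero, by the convention on $(q)_r$), collapsing to $q^{2j^2+2j}/\big((1-q^{2n-1})(q)_{n-2j-2}(q)_{n+2j}\big)$ and $q^{2j^2}/\big((1-q^{2n-1})(q)_{n-1-2j}(q)_{n-1+2j}\big)$ respectively. Multiplying by the prefactors $-q^{-2j^2-2j}$ and $-q^{-2j^2}$ and noting $(q)_{n-2j-2}=(q)_{n-1-(2j+1)}$, $(q)_{n+2j}=(q)_{n-1+(2j+1)}$, one obtains exactly the required coefficient of $\alpha_i$ in each parity. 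This proves the displayed identity, hence the theorem.

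The one genuine obstacle I anticipate is spotting the telescoping ``antiderivatives'' $q^{2m^2+2m}/\big((q)_{n-2m-2}(q)_{n+2m}\big)$ and $q^{2m^2}/\big((q)_{n-2m-1}(q)_{n+2m-1}\big)$ --- equivalently, the right-hand sides of the two displayed identities above; once these are written down, everything else (the parity split, the interchange of summation, and the bookkeeping of coefficients) is mechanical.
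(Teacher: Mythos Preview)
Your proof is correct, and the two telescoping identities check out exactly as you claim (each reduces to the polynomial identity $(1-q^{n+2m-1})(1-q^{n+2m})-q^{4m}(1-q^{n-2m-1})(1-q^{n-2m})=(1-q^{4m})(1-q^{2n-1})$ and its shifted analogue). However, your route differs from the paper's. The paper does not verify \eqref{pairdef} directly; instead it observes that the sequence defined by \eqref{aeven}--\eqref{aodd} is the unique sequence satisfying $\alpha'_0=0$, $\alpha'_1=-(1-q^2)$ and the three-term recurrence
\[
\frac{\alpha'_{n+2}}{1-q^{2n+4}}-\frac{q^{2n}\alpha'_n}{1-q^{2n}}=-\alpha_{n+1},
\]
then takes $\beta'_n$ as given by \eqref{b1}, defines the companion $\alpha'_n$ via the inverse relation \eqref{pairdefbis}, and checks by a short manipulation that this $\alpha'_n$ satisfies the same initial data and recurrence. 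Your approach is more direct and self-contained --- it sidesteps \eqref{pairdefbis} and the recurrence entirely, trading them for the discovery of the telescoping ``antiderivatives'' you flagged. What the paper's approach buys is an immediate inverse statement (their Theorem~\ref{main1inverse}): because the recurrence determines $\alpha_n$ from $\alpha'_n$ just as readily, one gets a Bailey pair going the other way with no extra work, whereas your telescoping argument does not make this reversibility transparent. Your observation that the normalization $\alpha_0=\beta_0=1$ is not actually used is also correct.
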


\begin{theorem} \label{main2}
Suppose that $(\alpha_n,\beta_n)$ is a Bailey pair relative to $1$ with $\alpha_0=\beta_0=0$.   Then $(\alpha'_n,\beta'_n)$ is a Bailey pair relative to $q$, where
\begin{equation} \label{a2}
\alpha'_n = \frac{1}{1-q}\left(-\frac{\alpha_{n+1}}{1-q^{2n+2}} + \frac{q^{2n}\alpha_n}{1-q^{2n}}\right)
\end{equation}
and
\begin{equation} \label{b2}
\beta'_n = -\beta_{n+1}.
\end{equation}
\end{theorem}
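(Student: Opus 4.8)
The plan is to verify directly that $(\alpha'_n,\beta'_n)$ satisfies the defining relation \eqref{pairdef} relative to $a=q$, that is,
\begin{equation*}
\beta'_n = \sum_{k=0}^{n} \frac{\alpha'_k}{(q)_{n-k}(q^2)_{n+k}},
\end{equation*}
taking as the only input the hypothesis that $\beta_m = \sum_{k=0}^{m} \alpha_k/\big((q)_{m-k}(q)_{m+k}\big)$ for all $m$. The first step is to clear the shift in the base: since $(1-q)(q^2;q)_{n+k} = (q)_{n+k+1}$, plugging the formula \eqref{a2} for $\alpha'_k$ into the right-hand side rewrites it as
\begin{equation*}
\sum_{k=0}^{n} \frac{1}{(q)_{n-k}(q)_{n+k+1}}\left(-\frac{\alpha_{k+1}}{1-q^{2k+2}} + \frac{q^{2k}\alpha_k}{1-q^{2k}}\right),
\end{equation*}
where the $k=0$ term of the second summand is understood to be $0$ because $\alpha_0 = 0$.

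Next I would split this into two sums, reindex the first by $j=k+1$ (so $j$ runs from $1$ to $n+1$) and rename the index of the second to $j$ (which then effectively runs from $1$ to $n$, the $j=0$ term vanishing since $\alpha_0=0$). For each $j$ with $1\leq j\leq n$ the two sums contribute a common factor $\alpha_j/(1-q^{2j})$ times
\begin{equation*}
-\frac{1}{(q)_{n-j+1}(q)_{n+j}} + \frac{q^{2j}}{(q)_{n-j}(q)_{n+j+1}} = \frac{-(1-q^{n+j+1}) + q^{2j}(1-q^{n-j+1})}{(q)_{n-j+1}(q)_{n+j+1}},
\end{equation*}
and the key elementary identity $-(1-q^{n+j+1}) + q^{2j}(1-q^{n-j+1}) = -(1-q^{2j})$ makes the factor $1-q^{2j}$ cancel, leaving $-\alpha_j/\big((q)_{n-j+1}(q)_{n+j+1}\big)$. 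The single leftover term $j=n+1$ coming from the first sum equals $-\alpha_{n+1}/\big((q)_{2n+1}(1-q^{2n+2})\big) = -\alpha_{n+1}/(q)_{2n+2}$.

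Collecting everything and restoring the (zero) $j=0$ term, the right-hand side becomes
\begin{equation*}
-\sum_{j=0}^{n+1} \frac{\alpha_j}{(q)_{(n+1)-j}(q)_{(n+1)+j}} = -\beta_{n+1} = \beta'_n,
\end{equation*}
which is precisely \eqref{b2}, completing the verification. I expect the only real work to be the index bookkeeping in the reindexing together with the one-line check of the numerator identity; everything else is formal manipulation of $q$-shifted factorials, and one should note in passing that the degenerate $0/0$ expression in \eqref{a2} at $n=0$ is harmless precisely because $\alpha_0=0$. One could instead phrase this as a base-changing transformation of Bailey pairs in the spirit of Bressoud and deduce it from a known change-of-base formula, but the direct computation above appears to be the cleanest route.
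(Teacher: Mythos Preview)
Your proof is correct. You verify the forward Bailey relation \eqref{pairdef} for $(\alpha'_n,\beta'_n)$ relative to $a=q$: starting from the prescribed $\alpha'_k$, you sum, reindex, and telescope using the elementary numerator identity $-(1-q^{n+j+1})+q^{2j}(1-q^{n-j+1})=-(1-q^{2j})$ to recover $-\beta_{n+1}$. All of the bookkeeping (the handling of the $k=0$ term via $\alpha_0=0$, the boundary term $j=n+1$, the factor $(1-q)(q^2)_{n+k}=(q)_{n+k+1}$) is accurate.

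The paper takes the dual route: it starts from $\beta'_n=-\beta_{n+1}$ and computes $\alpha'_n$ via the \emph{inverse} relation \eqref{pairdefbis}, splitting the factor $(1-q^{2n+1})$ as $(1-q^{n+j+1})+q^{n+j+1}(1-q^{n-j})$ to absorb pieces into the shifted factorials, shifting $j\mapsto j-1$, and then recognising $\alpha_{n+1}$ and $\alpha_n$ again through \eqref{pairdefbis}. Your approach has the advantage that it never invokes the inversion formula---one only needs the defining sum \eqref{pairdef}---and the single algebraic identity you use is a touch simpler than the paper's decomposition of $1-q^{2n+1}$. The paper's approach, on the other hand, makes it transparent \emph{why} the particular combination in \eqref{a2} arises (it is forced by the choice of $\beta'_n$), which is relevant because the same inversion argument is reused in the proof of Theorem~\ref{main1} and in deriving Theorem~\ref{main1inverse}. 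Either way, the content is the same telescoping; you have simply run it in the opposite direction.
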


\begin{theorem} \label{main3}
If $(\alpha_n,\beta_n)$ is a Bailey pair relative to $1$ with $\alpha_0 = \beta_0=1$, then $(\alpha''_n,\beta''_n)$ is a Bailey pair relative to $q$, where
\begin{equation} 
\alpha''_{2n} = \frac{1}{1-q}\left(q^{2n^2}\sum_{j=0}^{n}q^{-2j^2}\alpha_{2j} - q^{2n^2+2n}\sum_{j=0}^{n-1}q^{-2j^2-2j}\alpha_{2j+1}\right),
\end{equation}
\begin{equation} 
\alpha''_{2n+1} = \frac{1}{1-q}\left(q^{2n^2+2n}\sum_{j=0}^{n}q^{-2j^2-2j}\alpha_{2j+1} - q^{2n^2+4n+2}\sum_{j=0}^{n}q^{-2j^2}\alpha_{2j}\right),
\end{equation}
and
\begin{equation}
\beta''_n = \frac{\beta_{n}}{1-q^{2n+1}}.
\end{equation}
\end{theorem}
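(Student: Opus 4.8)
The plan is to obtain Theorem~\ref{main3} by composing Theorem~\ref{main1} with Theorem~\ref{main2}, as already indicated in the text. First I would start from a Bailey pair $(\alpha_n,\beta_n)$ relative to $1$ with $\alpha_0=\beta_0=1$ and apply Theorem~\ref{main1}, producing a Bailey pair $(\alpha'_n,\beta'_n)$ relative to $1$ given by \eqref{aeven}, \eqref{aodd} and \eqref{b1}. The key observation is that this new pair satisfies $\alpha'_0=\beta'_0=0$, which is exactly the hypothesis of Theorem~\ref{main2}. Applying Theorem~\ref{main2} to $(\alpha'_n,\beta'_n)$ then yields a Bailey pair relative to $q$, which I will call $(\alpha''_n,\beta''_n)$; all that remains is to simplify the formulas \eqref{a2} and \eqref{b2}, with $\alpha_n,\beta_n$ there replaced by $\alpha'_n,\beta'_n$, into the shape claimed in the theorem.

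The $\beta$-side is immediate: \eqref{b2} followed by \eqref{b1} gives $\beta''_n=-\beta'_{n+1}=\beta_n/(1-q^{2n+1})$, as claimed.

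For the $\alpha$-side I would split on the parity of the index. Setting $n\mapsto 2n$ in \eqref{a2}, the term $-\alpha'_{2n+1}/(1-q^{4n+2})$ simplifies, via \eqref{aodd}, to $q^{2n^2}\sum_{j=0}^n q^{-2j^2}\alpha_{2j}$, while the term $q^{4n}\alpha'_{2n}/(1-q^{4n})$ simplifies, via \eqref{aeven} and the identity $q^{4n}\cdot q^{2n^2-2n}=q^{2n^2+2n}$, to $-q^{2n^2+2n}\sum_{j=0}^{n-1}q^{-2j^2-2j}\alpha_{2j+1}$; together these give the stated expression for $\alpha''_{2n}$. Similarly, setting $n\mapsto 2n+1$ in \eqref{a2} and using \eqref{aeven} with $n\mapsto n+1$ (so that the exponent $2(n+1)^2-2(n+1)$ collapses to $2n^2+2n$) on one term and \eqref{aodd} on the other, one lands on the stated expression for $\alpha''_{2n+1}$.

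I do not expect any genuine obstacle here; the only point needing a remark is the index $n=0$, where the factor $q^{2n}\alpha'_n/(1-q^{2n})$ in \eqref{a2} is formally $0/0$. As in the proof of Theorem~\ref{main2} this term is read as $0$, and then $\alpha''_0=-\alpha'_1/\bigl((1-q)(1-q^2)\bigr)=\alpha_0/(1-q)=1/(1-q)$, which matches the stated even-index formula at $n=0$, the second sum there being empty. Everything else is routine cancellation of powers of $q$, and all the real content sits in Theorems~\ref{main1} and~\ref{main2}.
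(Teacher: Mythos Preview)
Your proposal is correct and follows exactly the approach indicated in the paper, which states just before Theorem~\ref{main3} that it ``is simply an application of Theorem~\ref{main1} followed by an application of Theorem~\ref{main2}.'' The paper gives no further details, so your explicit verification of the $\alpha$- and $\beta$-sides (including the parity split and the handling of the $n=0$ boundary term) is precisely what is needed to fill in that one-line remark.
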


An application of Theorem \ref{main1} or \ref{main3} to a ``typical" Bailey pair (from Slater's list \cite{Sl1}, for example) will give a positive definite quadratic form in the power of $q$ occurring in $\alpha_n$.   However, there are a few cases where we obtain an indefinite quadratic form.   For example, using Theorems \ref{main1} and \ref{main3} and the following Bailey pair relative to $1$ from Slater's list \cite[p. 468]{Sl1}, 

\begin{equation*}
\alpha_n=
\begin{cases}
1, &\text{if $n=0$}, \\
2(-1)^n, &\text{otherwise}
\end{cases}
\end{equation*}

\noindent and

\begin{equation*}
\beta_n = \frac{(-1)^n}{(q^2;q^2)_n},
\end{equation*}
we recover the Bailey pairs of Bringmann and Kane in \eqref{a2n}--\eqref{betan}.  Some other examples are recorded in Corollaries \ref{paircor1}--\ref{paircor3}.  Andrews' Bailey pairs in \eqref{mathcalA2n(1)}--\eqref{mathcalBn(2)} do not seem to be simple applications of Theorems \ref{main1}--\ref{main3}, but they can be deduced from \eqref{a2n}--\eqref{betan} using a change of base formula.   We discuss this in Section \ref{Andrews'pairs}.   For another treatment of these pairs, see \cite{An4}.
 
\subsection{Mock theta functions}

An important difference between the pairs of Andrews and those of Bringmann-Kane is that the former yield mock theta functions when substituted into \eqref{limitBailey}, while the latter do not.   However, as we showed in \cite{Lo-Os1}, the Bailey pairs of Bringmann and Kane \emph{do} give rise to mock theta functions after an appropriate application of the Bailey lemma.  These mock theta functions are $q$-hypergeometric double sums.

To recall them, we need some special functions.   We use 
the classical theta series
\begin{equation*} \label{j}
j(x,q):= \sum_{n \in \mathbb{Z}} (-x)^{n}q^{\binom{n}{2}} = (x)_{\infty} (q/x)_{\infty} (q)_{\infty},
\end{equation*} 
and for brevity, we write 
\noindent $J_{m}:= J_{m, 3m}$ with $J_{a,m} := j(q^{a}, q^{m})$, and $\overline{J}_{a,m}:=j(-q^{a}, q^{m})$. 
We also use the Hecke-type series
\begin{equation} \label{fdef}
f_{a,b,c}(x,y,q) : = \left(\sum_{r,s \geq 0} - \sum_{r,s < 0}\right) (-1)^{r+s} x^r y^s q^{a \binom{r}{2} + brs + c \binom{s}{2}},
\end{equation} 
which is an indefinite theta series when $ac < b^2$.   Here, we assume $a$, $c>0$. Finally, we employ the Appell-Lerch series
\begin{equation} \label{Appell-Lerch} 
m(x,q,z) := \frac{1}{j(z,q)} \sum_{r \in \mathbb{Z}} \frac{(-1)^r q^{\binom{r}{2}} z^r}{1-q^{r-1} xz},
\end{equation}

\noindent where $x$, $z \in \mathbb{C}^{*}:=\mathbb{C} \setminus \{ 0 \}$ with neither $z$ nor $xz$ an integral power of $q$.  The Appell-Lerch series is a ``mock Jacobi form" which specializes to a mock theta function when $x$ and $z$ are of the form $\zeta q^{\frac{m}{n}}$ with $\zeta$ a root of unity \cite{Za1,Zw1}.  Recall that a mock theta function is the holomorphic part of a weight $1/2$ harmonic weak Maass form $f(\tau)$ (as usual, $q := e^{2 \pi i \tau}$ where $\tau=x + iy \in \mathbb{H}$) whose image of under the operator $\xi_{\frac{1}{2}} := 2iy^{\frac{1}{2}} \frac{\overline{\partial}}{\partial \overline{\tau}}$ is a unary theta function \cite{On1,Za1}.  

The main result in \cite{Lo-Os1} contains identities equivalent to the following.
\begin{theorem}{\cite[Theorem 1.3]{Lo-Os1}}
\begin{align}
\mathcal{W}_1(q) &:= \sum_{n \geq 1} \sum_{n \geq j \geq 1} \frac{(-1)_j (q; q^2)_{j-1} (-1)^{j} q^{n^2 + \binom{j+1}{2}}}{(-q)_n (q)_{n-j} (q)_{2j-1}} \nonumber \\
&= \frac{-2q^2}{(q)_{\infty}}f_{3,5,3}(q^5,q^5,q) \nonumber \\
&= 4m(-q^{17},q^{48},q^{24}) - 4q^{-5}m(-q,q^{48},q^{24}) - 2q^2\frac{J_8J_{12}J_{96}J_{7,16}\overline{J}_{4,24}J_{6,48}J_{30,96}}{J_{24}J_{48}J_{3,8}J_{2,12}J_{14,96}J_{46,96}}, \label{mockW1} \\
\mathcal{W}_2(q) &:= {\sum\limits_{n \geq 1}}^{*} \sum_{n \geq j \geq 1}\frac{(q; q^2)_n (-1)_{j} (q; q^2)_{j-1} (-1)^{n+j} q^{\binom{j+1}{2}}}{(-q)_{n} (q)_{n-j} (q)_{2j-1}} \nonumber \\
&= \frac{q(q;q^2)_{\infty}}{(q^2;q^2)_{\infty}}f_{1,3,1}(-q^2,-q^2,q) \nonumber \\
&= 4m(-q,q^8,q^4) + q\frac{J_{1,8}^2J_{3,8}^3J_{2,16}}{J_8^4J_{16}}, \label{mockW2} \\ 
\mathcal{W}_3(q) &:=  \sum_{n \geq 1} \sum_{n \geq j \geq 1} \frac{(q; q^2)_{n} (-1;q^2)_j (q^2; q^4)_{j-1} (-1)^{n+j} q^{n^2 + j^2 + j}}{(-q^2; q^2)_{n} (q^2; q^2)_{n-j} (q^2; q^2)_{2j-1}}\nonumber \\
&= \frac{2q^3(q;q^2)_{\infty}}{(q^2;q^2)_{\infty}}f_{1,2,1}(-q^7,-q^7,q^4) \nonumber \\
&= 4m(-q,q^{12},q^4) + 2q^3\frac{\overline{J}_{1,12}^2}{\overline{J}_{1,4}}, \label{mockW3} \\
\mathcal{W}_4(q) &:= \sum_{n \geq 0} \sum_{n \geq j \geq 0} \frac{(-q)_{j} (q; q^2)_{j} (-1)^j q^{n^2 + n + \binom{j+1}{2}}}{(-q)_n (q)_{n-j} (q)_{2j+1}} \nonumber \\
&= \frac{1}{(q)_{\infty}}f_{3,5,3}(q^3,q^3,q) \nonumber \\
&= -2q^{-4}m(-q^5,q^{48},q^{24}) - 2q^{-2}m(-q^{11},q^{48},q^{24}) + \frac{J_8J_{12}J_{96}J_{3,16}\overline{J}_{4,24}J_{6,48}J_{18,96}J_{30,96}}{J_{24}J_{48}J_{1,8}J_{2,12}J_{6,96}J_{26,96}J_{38,96}}. \label{mockW4} 
\end{align}
\end{theorem}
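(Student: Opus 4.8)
The strategy follows the Bailey-pair philosophy of the present paper: realize each $\mathcal{W}_i(q)$ as a Hecke-type double sum through the Bailey lemma, convert that double sum into Appell--Lerch series, and then simplify the remaining theta quotient. For the first stage, start from the Bringmann--Kane pairs \eqref{a2n}--\eqref{betan} and apply the Bailey lemma \eqref{alphaprimedef}--\eqref{betaprimedef} once, with an appropriate choice of $(\rho_1,\rho_2)$: some parameters taken to be finite --- values among $\pm 1$ and $\pm q$, which produce the factors $(-1)_j$, $(-q)_n$ and the exponent $q^{\binom{j+1}{2}}$ seen in the definitions --- and the rest sent to $\infty$. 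This yields a new Bailey pair whose $\beta$-sequence is exactly the inner sum $\sum_{n\ge j}\cdots$ defining $\mathcal{W}_i$, while its $\alpha$-sequence is the Bringmann--Kane $\alpha$ times an explicit rational factor. Substituting this pair into the limiting Bailey lemma \eqref{limitBailey}, with a second (likewise specialized) pair of parameters --- both sent to $\infty$ in the cases carrying a $q^{n^2}$, such as $\mathcal{W}_1$ --- makes the left-hand side equal to $\mathcal{W}_i(q)$.

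For the second stage, insert the closed forms \eqref{a2n}--\eqref{a2n+1} (respectively \eqref{alpha2n}--\eqref{alpha2n+1}) into the right-hand side of \eqref{limitBailey} and split the outer sum according to the parity of $n$. Since those $\alpha$'s are short sums of powers of $q$ over an index $j$ running through an interval, each resulting piece is a double sum over $(n,j)$ whose exponent of $q$ is a quadratic form; reassembling the pieces and reindexing, one matches the sign pattern $\bigl(\sum_{r,s\ge 0}-\sum_{r,s<0}\bigr)$ and the form $a\binom{r}{2}+brs+c\binom{s}{2}$ of \eqref{fdef}, giving the first equality in each line (for instance $\mathcal{W}_1(q)=\frac{-2q^2}{(q)_\infty}f_{3,5,3}(q^5,q^5,q)$; observe $b^2>ac$ and $a=c$ in every case). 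The care needed here is in tracking the prefactor $\frac{(aq/\rho_1)_\infty(aq/\rho_2)_\infty}{(aq)_\infty(aq/\rho_1\rho_2)_\infty}$ and in completing the partial sums over $j$ into full Hecke-type series.

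For the third stage, since $a=c$ and $b^2>ac$, apply the Hickerson--Mortenson expansion of $f_{a,b,a}(x,x,q)$ as a $\mathbb{Z}$-linear combination of Appell--Lerch series $m(\,\cdot\,,\,\cdot\,,\,\cdot\,)$ plus a single quotient of theta functions; this produces the $m$-terms in \eqref{mockW1}--\eqref{mockW4} directly. What remains is to rewrite the residual theta quotient in the normalized notation $J_m$, $J_{a,m}$, $\overline{J}_{a,m}$, which uses only the Jacobi triple product form of $j(x,q)$ and elementary theta identities --- for example $j(q^n x,q)=(-1)^n q^{-\binom{n}{2}}x^{-n} j(x,q)$, the dissection $j(x,q)=j(-qx^2,q^4)-x\,j(-q^3x^2,q^4)$, and the quintuple product relation. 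Finally the mock theta assertion is immediate from the Appell--Lerch representation: each $m(x,q,z)$ is a mock Jacobi form whose completion has a unary theta shadow, and the theta quotient is weakly holomorphic modular, so for the relevant specializations each $\mathcal{W}_i(q)$ is the holomorphic part of a weight-$\tfrac12$ harmonic Maass form whose image under $\xi_{\frac{1}{2}}$ is a unary theta function.

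The main obstacle is the third stage: carrying out the Hickerson--Mortenson conversion correctly (the decomposition is not canonical and involves several changes of variable), and then collapsing the resulting product of theta functions into the compact $J$-quotients displayed in \eqref{mockW1} and \eqref{mockW4} --- which involve well over a dozen theta factors in all and are most safely verified against a $q$-series expansion. Stages one and two, by contrast, are routine once the specializations of the Bailey parameters are pinned down, amounting to a careful but essentially mechanical reindexing of double sums.
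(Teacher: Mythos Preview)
Your proposal is correct and matches the paper's methodology, but note that this particular theorem is not actually proved in the present paper---it is quoted from \cite{Lo-Os1} as background. The paper does, however, prove the closely analogous Theorems~\ref{mockthm1}--\ref{mockthm3} by exactly the three-stage procedure you describe: apply the Bailey lemma \eqref{alphaprimedef}--\eqref{betaprimedef} with a suitable $(\rho_1,\rho_2)$ to a known pair, substitute into \eqref{limitBailey} and reindex to obtain an $f_{a,b,c}$, then invoke the Hickerson--Mortenson Theorems~\ref{hm1}--\ref{hm3} and simplify via Proposition~\ref{mprops}, with the residual modular identity checked by computer. Your outline would reproduce the original proof in \cite{Lo-Os1} along the same lines; the only refinement worth mentioning is that the paper does not attempt to collapse the theta quotients by hand using dissections or the quintuple product, but instead verifies them as finite modular-form computations using Garvan's software.
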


In particular, $\mathcal{W}_1(q)$--$\mathcal{W}_4(q)$ are mock theta functions.  We remark that the series defining $\mathcal{W}_{2}(q)$ does not converge.  However, similar to the classical sixth order mock theta function $\mu(q)$ \cite{AH}, the sequence of even partial sums and the sequence of odd partial sums both converge.  We define $\mathcal{W}_{2}(q)$ as the average of these two values.   This averaging is denoted here and throughout by the notation ${\sum\limits_{}}^{*}$. 

The second goal of this paper is to use Bailey pairs arising from Theorems \ref{main1} and \ref{main3} to obtain many more mock theta functions like $\mathcal{W}_1(q)$--$\mathcal{W}_4(q)$.   Just as with the pairs of Bringmann and Kane, this first requires one application of the Bailey lemma, and so the mock theta functions we obtain are $q$-hypergeometric double sums.  We record these mock theta functions in three separate results, corresponding to three sets of Bailey pairs.   We first express the double sums in terms of the indefinite theta series \eqref{fdef} and then in terms of the Appell-Lerch series \eqref{Appell-Lerch}.

\begin{theorem} \label{mockthm1}
The following are mock theta functions. 
\begin{align}
\mathcal{M}_{1}(q) &:=  \sum_{n \geq 1} \sum_{n \geq j \geq 1} \frac{(-1)_j(-1)^jq^{n^2+\binom{j}{2}}}{(-q)_n(q)_{n-j}(q^2;q^2)_{j-1}(1-q^{2j-1})}  \nonumber \\
&= -\frac{2q}{(q)_{\infty}}f_{3,5,3}(q^4,q^6,q) \nonumber \\
&= 4q^{-3} m(-q^7, q^{48}, q^{24}) + 4m(-q^{25}, q^{48}, q^{-24}) - 2 - 2\frac{J_8J_{12}J_{96}J_{1,16}\overline{J}_{4,24}J_{6,48}J_{18,96}}{J_{24}J_{48}J_{3,8}J_{2,12}J_{2,96}J_{34,96}}, \label{mock1-1} \\
\mathcal{M}_{2}(q) &:= {\sum\limits_{n \geq 1}}^{*} \sum_{n \geq j \geq 1} \frac{(q;q^2)_n(-1)_j(-1)^{n+j}q^{\binom{j}{2}}}{(-q)_n(q)_{n-j}(q^2;q^2)_{j-1}(1-q^{2j-1})} \nonumber \\
&= \frac{(q;q^2)_{\infty}}{(q^2;q^2)_{\infty}}f_{1,3,1}(-q,-q^3,q) \nonumber \\
&= 4m(-q^5,q^8,q^4) -2 - \frac{J_{1,8}^3J_{3,8}^2J_{6,16}}{J_8^4J_{16}},  \label{mock1-2} \\
\mathcal{M}_{3}(q) &:=  \sum_{n \geq 1} \sum_{n \geq j \geq 1} \frac{(q;q^2)_n(-1;q^2)_j(-1)^{n+j}q^{n^2+j^2-j}}{(-q^2;q^2)_n(q^2;q^2)_{n-j}(q^4;q^4)_{j-1}(1-q^{4j-2})} \nonumber \\
&=  \frac{2q(q;q^2)_{\infty}}{(q^2;q^2)_{\infty}}f_{1,2,1}(-q^5,-q^9,q^4) \nonumber \\
&= 4m(-q^7, q^{12},q^4) - 2 + 2\frac{J_{12}^3\overline{J}_{5,12}}{J_{4,12}\overline{J}_{1,12}\overline{J}_{3,12}},  \label{mock1-3} \\
\mathcal{M}_{4}(q) &:= \sum_{n \geq 0} \sum_{n \geq j \geq 0} \frac{(-1)^jq^{n^2+n+\binom{j}{2}}}{(-q)_n(q)_{n-j}(q)_{j}(1-q^{2j+1})} \nonumber \\
&= \frac{1}{(q)_{\infty}}f_{3,5,3}(q^2,q^4,q) \nonumber \\
&= 2m(-q^{29}, q^{48},q^{24}) - 1 - 2q^{-1} m(-q^{13}, q^{48}, q^{-24}) +q\frac{J_8J_{12}J_{96}^3J_{5,16}\overline{J}_{4,24}J_{6,48}J_{18,48}}{J_{24}J_{48}^2J_{1,8}J_{2,12}J_{10,96}J_{22,96}J_{42,96}}, \label{mock1-4} \\
\mathcal{M}_{5}(q) &:= \sum_{n \geq 0} \sum_{n \geq j \geq 0}  \frac{(-1)^{j}q^{\binom{n+1}{2}+\binom{j}{2}}}{(q)_{n-j}(q)_{j}(1-q^{2j+1})}\nonumber  \\
&= \frac{(-q)_{\infty}}{(q)_{\infty}} f_{1,2,1}(q,q^3,q^2) \nonumber \\
&= 2m(q^5, q^6,q^2) - 1 -q\frac{J_6^3}{J_{2,6}J_{3,6}}. \label{mock1-5}
\end{align}
\end{theorem}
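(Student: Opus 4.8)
The plan is to obtain each $\mathcal{M}_i(q)$ by the by-now-standard three-step recipe: (1) construct an explicit Bailey pair by applying Theorem \ref{main1} or Theorem \ref{main3} to a simple seed pair; (2) make a single application of the Bailey lemma and pass to the limiting form \eqref{limitBailey}, which turns the $\beta$-side into the $q$-hypergeometric double sum defining $\mathcal{M}_i(q)$ and the $\alpha$-side into a theta quotient times a Hecke-type series $f_{a,b,c}$; (3) rewrite that Hecke-type series in terms of Appell--Lerch series $m(x,q,z)$ and theta quotients $J$, which exhibits $\mathcal{M}_i(q)$ as a mock theta function.

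For step (1), take the seed to be a standard Bailey pair relative to $1$ with $\alpha_0=\beta_0=1$ from Slater's list \cite{Sl1} --- for $\mathcal{M}_1,\mathcal{M}_2,\mathcal{M}_4,\mathcal{M}_5$ the pair $\alpha_0=1$, $\alpha_n=2(-1)^n$ ($n\ge1$), $\beta_n=(-1)^n/(q^2;q^2)_n$ highlighted in the Introduction, and for $\mathcal{M}_3$ a closely related seed in base $q^2$. Applying Theorem \ref{main1} yields a Bailey pair $(\alpha'_n,\beta'_n)$ relative to $1$ with $\alpha'_0=\beta'_0=0$ and $\beta'_n=-\beta_{n-1}/(1-q^{2n-1})$ (this explains the factors $(q^2;q^2)_{j-1}$ and $1-q^{2j-1}$ in $\mathcal{M}_1$--$\mathcal{M}_3$), while $\alpha'_n$ is a finite sum with quadratic exponents $2n^2\pm 2n$; applying Theorem \ref{main3} instead yields a pair relative to $q$ with $\beta''_n=\beta_n/(1-q^{2n+1})$ (explaining $1-q^{2j+1}$ in $\mathcal{M}_4,\mathcal{M}_5$). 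These are precisely the Bailey pairs recorded in the preceding Corollaries. For step (2), insert this pair into the Bailey lemma \eqref{alphaprimedef}--\eqref{betaprimedef} with a suitable specialization of $\rho_1,\rho_2$ (one or both tending to infinity, $\rho_1$ sometimes specialized to $-1$), which introduces the factors $(-1)_j$ and $(-q)_n$ seen in the $\mathcal{M}_i$; then substitute into \eqref{limitBailey} with $\rho_1,\rho_2\to\infty$, so that $(\rho_1)_n(\rho_2)_n(aq/\rho_1\rho_2)^n$ contributes the Gaussian factor $q^{n^2}$, $q^{n^2+n}$, or $q^{\binom{n+1}{2}}$ according to the choice made. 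The $\beta$-side is then literally the double sum defining $\mathcal{M}_i(q)$, while on the $\alpha$-side the explicit exponents from Theorems \ref{main1}/\ref{main3} combine, after splitting the summation range over $(n,j)$ into the four ``quadrants'' of \eqref{fdef}, into a single Hecke-type exponent $a\binom r2+brs+c\binom s2$; collecting the theta quotient coming from the prefactor of \eqref{limitBailey} gives the first equality in each case, e.g. $\mathcal{M}_1(q)=-\tfrac{2q}{(q)_\infty}f_{3,5,3}(q^4,q^6,q)$. Exactly as with $\mathcal{W}_2(q)$ in \cite{Lo-Os1}, the series for $\mathcal{M}_2(q)$ does not converge and must be read via the averaging ${\sum}^{*}$, so the corresponding identity is to be understood at the level of even and odd partial sums.

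For step (3), observe that $ac<b^2$ in every case ($b^2-ac=16$, $8$, or $3$), so each $f_{a,b,c}(x,y,q)$ is a genuine indefinite theta series, and the passage to Appell--Lerch form proceeds by the same techniques used to prove \cite[Theorem 1.3]{Lo-Os1}: one expresses $f_{a,b,c}$ as an explicit combination of terms $m(-q^{\alpha},q^{\beta},q^{\gamma})$ plus a quotient of theta functions $J_{\bullet}$. Since $m(x,q,z)$ is a mock Jacobi form that specializes to a mock theta function at $x,z$ of the form $\zeta q^{m/n}$, the second equality then shows that $\mathcal{M}_i(q)$ is a mock theta function, which is the assertion of the theorem.

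The main obstacle is precisely this last step. Unlike the symmetric series $f_{3,5,3}(q^5,q^5,q)$, $f_{1,3,1}(-q^2,-q^2,q)$, etc. appearing for $\mathcal{W}_1$--$\mathcal{W}_4$, several of the $f_{a,b,c}$ here are asymmetric ($x\ne y$), so before the Appell--Lerch reduction can be applied one must first symmetrize, trading $f_{a,b,c}(x,y,q)$ for a symmetric Hecke-type series plus ordinary theta functions by a change of the summation indices, and only then carry out the (multi-term) reduction to Appell--Lerch sums. Keeping careful track of all the resulting theta quotients through both of these manipulations --- and checking that they assemble into the quotients of $J$'s displayed in \eqref{mock1-1}--\eqref{mock1-5} --- is where essentially all of the computation lies.
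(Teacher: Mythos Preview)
Your three-step outline matches the paper's strategy, but step~(1) is off in a way that would derail the whole computation. The seed $\alpha_n=2(-1)^n$, $\beta_n=(-1)^n/(q^2;q^2)_n$ is the one that, via Theorems~\ref{main1} and~\ref{main3}, produces the Bringmann--Kane pairs \eqref{a2n}--\eqref{betan}; those are precisely the pairs that were used in \cite{Lo-Os1} to obtain $\mathcal{W}_1$--$\mathcal{W}_4$, and indeed following your recipe with this seed and $(\rho_1,\rho_2)=(-1,\infty)$ in the Bailey lemma yields the double sum with exponent $n^2+\binom{j+1}{2}$, i.e.\ $\mathcal{W}_1(q)$, not $\mathcal{M}_1(q)$. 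The correct seed for Theorem~\ref{mockthm1} is the one behind Corollary~\ref{paircor1}: $\alpha_n=(-1)^n(q^n+q^{-n})$, $\beta_n=(-1)^nq^{-n}/(q^2;q^2)_n$. The extra $q^{-n}$ in this $\beta_n$ survives Theorem~\ref{main1} as a factor $q^{-j+1}$ in $b_j$, which is exactly what shifts $\binom{j+1}{2}$ down to $\binom{j}{2}$ in the final double sum. Likewise, $\mathcal{M}_3$ does not require a separate seed in base $q^2$: the paper uses the \emph{same} Bailey pair \eqref{a1n'even}--\eqref{b1n'} and simply takes $(\rho_1,\rho_2,q)=(q,\infty,q^2)$ in \eqref{limitBailey}.

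Your step~(3) also mischaracterizes the Appell--Lerch reduction. No preliminary symmetrization of $f_{a,b,c}(x,y,q)$ is needed; the Hickerson--Mortenson formulas (Theorems~\ref{hm1}--\ref{hm3} here) apply directly to arbitrary $x,y$ through the auxiliary function $g_{a,b,c}(x,y,q,z_1,z_0)$ in \eqref{g}, which already handles the asymmetric case. The residual work is then to simplify the resulting $m$-terms using Proposition~\ref{mprops} (in particular \eqref{m1}--\eqref{m3}) and to verify the leftover theta-quotient identity, which the paper dispatches as a finite modular-form computation. So the ``main obstacle'' you identify is not symmetrization but bookkeeping through \eqref{g} and \eqref{m3}.
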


\begin{theorem} \label{mockthm2}
The following are mock theta functions.  
\begin{align}
\mathcal{M}_{6}(q) &:= \sum_{n \geq 1} \sum_{n \geq j \geq 1} \frac{(-1)^jq^{n^2+\binom{j+1}{2}}}{(q)_{n-j}(q)_{j-1}(1-q^{2j-1})} \nonumber \\
&= -\frac{q^2}{(q)_{\infty}}f_{3,7,3}(q^5,q^6,q) \nonumber \\
&=  m(-q^{49}, q^{120}, q^{-3}) - q^{-3} m(-q^{89}, q^{120}, q^{-3}) + q^{-3} + q^{-14} m(-q^{119}, q^{120}, q^3) - q^{-14} \nonumber \\
&- q^{-1} m(-q^{79}, q^{120}, q^3) + q^{-1} - q^{-11}\frac{J_{12,48}J_{16,40}J_{2,20}J_{3,40}\overline{J}_{17,40}J_{40}}{J_1J_{3,120}\overline{J}_{6,40}J_{20}J_{80}} \nonumber \\
& + q^{-4}\frac{J_{24,48}J_{1,40}J_{4,40}\overline{J}_{1,40}J_{8,20}\overline{J}_{4,40}J_{18,40}J_{80}}{J_1J_{3,120}\overline{J}_{6,40}\overline{J}_{2,40}J_{20}^2J_{40}} \nonumber \\
&+ q^{-12}\frac{J_{24,48}J_{1,40}J_{4,40}\overline{J}_{1,40}J_{8,20}\overline{J}_{16,40}J_{42,80}^2}{J_1J_{3,120}\overline{J}_{6,40}\overline{J}_{2,40}J_{20}^2J_{80}}, \label{mock2-1} \\
\mathcal{M}_{7}(q) 
&:= \sum_{n \geq 1} \sum_{n \geq j \geq 1} \frac{(-1)_n(-1)^jq^{\binom{n+1}{2}+ \binom{j+1}{2}}}{(q)_{n-j}(q)_{j-1}(1-q^{2j-1})} \nonumber \\
&= -\frac{2q^2(-q)_{\infty}}{(q)_{\infty}}f_{1,3,1}(q^4,q^5,q^2) \nonumber \\
&= 2q^{-1} m(-q, q^{16}, q^{-1}) - 2q^{-1}\frac{J_{4,8}J_{16,32}J_{1,16}J_{14,32}}{J_{1,2}\overline{J}_{2,16}\overline{J}_{0,16}}, \label{mock2-2} \\
\mathcal{M}_{8}(q) 
&:= 2{\sum\limits_{n \geq 1}}^{*} \sum_{n \geq j \geq 1} \frac{(q;q^2)_n(-1)^{n+j}q^{\binom{j+1}{2}}}{(q)_{n-j}(q)_{j-1}(1-q^{2j-1})} \nonumber \\
&= \frac{q(q;q^2)_{\infty}}{(q^2;q^2)_{\infty}}f_{1,5,1}(-q^2,-q^3,q) \nonumber \\
&= 2m(-q^7,q^{24},q^6) + 2q^{-2}m(-q,q^{24},q^{-6}) + q \frac{J_1J_{3,8}J_{2,16}}{J_2J_{16}}, \\
\mathcal{M}_{9}(q) 
&:= \sum_{n \geq 1} \sum_{n \geq j \geq 1}\frac{(q;q^2)_n(-1)^{n+j}q^{n^2+j^2+j}}{(q^2;q^2)_{n-j}(q^2;q^2)_{j-1}(1-q^{4j-2})} \nonumber \\
&= \frac{q^3(q;q^2)_{\infty}}{(q^2;q^2)_{\infty}}f_{1,3,1}(-q^7,-q^9,q^4)  \nonumber \\
&= m(-q^8, q^{32}, q^{-2}) - q^{-1}\frac{J_{64}^2J_{28,64}}{J_{32}J_{4,64}} + q^{-1}\frac{J_{8,16}J_{32,64}J_{4,32}J_{24,64}}{\overline{J}_{1,4}\overline{J}_{6,32}\overline{J}_{2,32}},  \label{mock2-4} \\
\mathcal{M}_{10}(q) 
&:= \sum_{n \geq 1} \sum_{n \geq j \geq 1}\frac{(-1)_{2n}(-1)^{j}q^{n+j^2+j}}{(q^2;q^2)_{n-j}(q^2;q^2)_{j-1}(1-q^{4j-2})} \nonumber \\
&=  -\frac{2q^3(-q)_{\infty}}{(q)_{\infty}}f_{1,5,1}(q^5,q^7,q^2) \nonumber \\ 
&=  -2q^{-1} m(-q^{10}, q^{48}, q^{-2}) -2q^{-4} m(-q^2, q^{48}, q^{-2}) - 4q^{-3}\frac{J_{8,32}J_{20,48}\overline{J}_{22,48}J_{2,24}J_{6,48}J_{96}}{J_{1,2}\overline{J}_{8,48}\overline{J}_{0,48}J_{24}J_{2,48}} \nonumber \\ 
&+ 2q^{6}\frac{J_{16,32}J_{4,48}\overline{J}_{2,48}J_{10,24}\overline{J}_{4,48}J_{20,48}J_{96}}{J_{1,2}\overline{J}_{8,48}\overline{J}_{0,48}J_{24}^2J_{48}} + 2q^{-4}\frac{J_{16,32}J_{4,48}\overline{J}_{2,48}J_{10,24}\overline{J}_{20,48}J_{44,96}^2}{J_{1,2}\overline{J}_{8,48}\overline{J}_{0,48}J_{24}^2J_{96}}, \label{mock2-5} \\
\mathcal{M}_{11}(q) 
&:= \sum_{n \geq 0} \sum_{n \geq j \geq 0} \frac{(-1)^jq^{n^2+n+\binom{j+1}{2}}}{(q)_{n-j}(q)_{j}(1-q^{2j+1})} \nonumber \\
&= \frac{1}{(q)_{\infty}}f_{3,7,3}(q^3,q^4,q) \nonumber \\
&= q^{-8} m(-q^{17}, q^{120}, q^{-3}) + q^{-6} m(-q^{23}, q^{120}, q^3) - q^{-1} m(-q^{47}, q^{120}, q^3) \nonumber \\
&+ q^{-12} m(-q^7, q^{120}, q^3) + q^{-9}\frac{J_{12,48}J_{1,40}J_{8,40}\overline{J}_{19,40}J_{6,20}\overline{J}_{12,40}J_{18,40}J_{40}^2}{J_1J_{3,120}\overline{J}_{14,40}\overline{J}_{10,40}J_{20}^3J_{80}} \nonumber \\
&+q^{-4}\frac{J_{12,48}J_{1,40}J_{8,40}\overline{J}_{19,40}J_{6,20}\overline{J}_{8,40}J_{19,40}^2\overline{J}_{1,40}^2}{J_1J_{3,120}\overline{J}_{14,40}\overline{J}_{10,40}J_{20}^3J_{40}J_{80}} \nonumber \\
& - q^{-4}\frac{J_{24,48}J_{1,40}J_{12,40}\overline{J}_{1,40}J_{4,20}\overline{J}_{12,40}J_{18,40}J_{80}}{J_1J_{3,120}\overline{J}_{14,40}\overline{J}_{10,40}J_{20}^2J_{40}} - q^{-8}\frac{J_{24,48}J_{1,40}J_{12,40}\overline{J}_{1,40}J_{4,20}\overline{J}_{8,40}J_{38,80}^2}{J_1J_{3,120}\overline{J}_{14,40}\overline{J}_{10,40}J_{20}^2J_{80}}  , \label{mock2-6} \\
\mathcal{M}_{12}(q) 
&:= \sum_{n \geq 0} \sum_{n \geq j \geq 0} \frac{(-q)_n(-1)^jq^{\binom{n+1}{2} + \binom{j+1}{2}}}{(q)_{n-j}(q)_{j}(1-q^{2j+1})} \nonumber \\
&= \frac{(-q)_{\infty}}{(q)_{\infty}} f_{1,3,1}(q^2,q^3,q^2) \nonumber  \\
&=  -q^{-1} m(-q^3, q^{16}, q) + q\frac{J_{4,8}J_{16,32}J_{5,16}J_{6,32}}{J_{1,2}\overline{J}_{6,16}\overline{J}_{4,16}}. \label{mock2-7} 
\end{align}
\end{theorem}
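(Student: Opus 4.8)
The plan is to establish each line of the theorem in four steps: (i) produce the right Bailey pair, (ii) feed it into the Bailey lemma to obtain the double-sum left-hand side, (iii) read off the indefinite Hecke-type series $f_{a,b,c}$ on the other side of the Bailey lemma, and (iv) convert $f_{a,b,c}$ into Appell-Lerch form and invoke the modularity of the ingredients. For step (i): each of $\mathcal{M}_6(q),\dots,\mathcal{M}_{12}(q)$ is built from a single Bailey pair obtained by applying Theorem \ref{main1} or Theorem \ref{main3} (possibly after the dilation $q\mapsto q^2$, which accounts for the $q^2$-base pairs behind $\mathcal{M}_9(q)$ and $\mathcal{M}_{10}(q)$) to one of the classical pairs in Slater's list; these are the relevant pairs among those recorded in Corollaries \ref{paircor1}--\ref{paircor3}. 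Such pairs have the feature that $\alpha_n$ carries a quadratic exponent $2n^2+\cdots$ and splits over even and odd $n$ into a short inner sum, which is precisely what forces an indefinite rather than positive-definite form on the other side.

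For step (ii), starting from such a pair $(\alpha_n,\beta_n)$ relative to $1$ (or $q$), I would apply the Bailey lemma once via \eqref{alphaprimedef}--\eqref{betaprimedef} with $(\rho_1,\rho_2)$ chosen to match the Pochhammer factors appearing in the target $\mathcal{M}_k$: a limit $\rho_1,\rho_2\to\infty$ for a plain $q^{n^2}$ or $q^{\binom{n+1}{2}}$ weight, $\rho_1=-1$ (resp.\ $\rho_1=-q$) with $\rho_2\to\infty$ to create a $(-1)_n$ (resp.\ $(-q)_n$) numerator, and parameters of $q^{1/2}$-type to create the $(q;q^2)_n$ factors. The new $\beta'_n$ from \eqref{betaprimedef} is a finite sum, so substituting the resulting pair into the limiting form \eqref{limitBailey} with its own auxiliary parameters sent to $\infty$ turns the left-hand side into exactly the stated $q$-hypergeometric double sum after routine simplification of the $q$-Pochhammer quotients. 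One genuine subtlety here is $\mathcal{M}_8(q)$: the defining series does not converge term-by-term, and, just as for the sixth order mock theta function $\mu(q)$ \cite{AH} and for $\mathcal{M}_2(q)$ and $\mathcal{W}_2(q)$ above, one must check that the even and odd partial sums each converge and take their average, which is what the notation ${\sum}^{*}$ records.

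For step (iii), on the right-hand side of \eqref{limitBailey} the factors $(\rho_i)_n/(aq/\rho_i)_n$ cancel against the chosen specialization, leaving $\tfrac{1}{(q)_\infty}$ times a theta-quotient prefactor times $\sum_{n\ge0}q^{(\text{quadratic in }n)}\alpha'_n$, with $\alpha'_n$ essentially the $\alpha_n$ coming from Theorem \ref{main1}/\ref{main3}, hence itself a finite inner sum $\sum_j q^{(\text{quadratic})}\alpha_{2j}$ or $\sum_j q^{(\text{quadratic})}\alpha_{2j+1}$. Using that the underlying Slater $\alpha_n$ is, up to sign, constant in $n$ away from $n=0$, I would interchange the two summations, complete the square in the exponent, and re-index so that the double sum becomes $\bigl(\sum_{r,s\ge0}-\sum_{r,s<0}\bigr)(-1)^{r+s}x^ry^sq^{a\binom r2+brs+c\binom s2}$ (with $q$ replaced by the appropriate base $q^d$ in the $q^2$-dilated cases) for the exponents and base listed in the theorem; one checks $ac<b^2$ in every case, so each is a bona fide indefinite $f_{a,b,c}$ as in \eqref{fdef}. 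This produces the first displayed equality in each line, and the stray constant terms ($-2$, $-1$, $q^{-3}$, and so on) in the subsequent Appell-Lerch lines will be accounted for by the $n=0$ contribution or by the splitting in the next step.

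For step (iv), I would apply the now-standard reduction of a Hecke-type series $f_{a,b,c}(x,y,q)$ to a sum of Appell-Lerch series $m(\cdot,q^{N},\cdot)$ plus quotients of the theta functions $j(\cdot,q^{N})$, of which the cases coinciding with \cite{Lo-Os1} are instances: split the lattice region defining $f_{a,b,c}$ along rays through the origin, recognize each conically bounded block as a geometric-type series that sums to an $m$, collect the residual theta products, and simplify with elementary $j(x,q)$ and $J_m$ identities until the normalized form in the statement is reached. Finally, since $m(x,q,z)$ is a mock Jacobi form that specializes to a mock theta function when $x$ and $z$ have the form $\zeta q^{m/n}$ with $\zeta$ a root of unity \cite{Za1,Zw1}, and the $J_{a,m}$ and $\overline{J}_{a,m}$ are weakly holomorphic modular forms, each $\mathcal{M}_k(q)$ is the holomorphic part of a weight $1/2$ harmonic weak Maass form whose shadow is a unary theta function \cite{On1,Za1}, i.e.\ a mock theta function. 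The main obstacle is the interlocking steps (iii) and (iv): forcing the Bailey-lemma output into the exact normal form $f_{a,b,c}$ while keeping track of all sign conventions, the constant terms, and the $q\mapsto q^2$ dilations, and then carrying out the Hecke-to-Appell-Lerch conversions, which for several of the series (notably those behind $\mathcal{M}_6(q)$, $\mathcal{M}_{10}(q)$, and $\mathcal{M}_{11}(q)$) produce long theta-quotient tails and require repeated use of theta dissection identities.
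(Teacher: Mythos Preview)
Your plan is essentially the paper's own proof: it applies \eqref{alphaprimedef}--\eqref{betaprimedef} with $\rho_1,\rho_2\to\infty$ to the pairs of Corollary~\ref{paircor2}, then uses \eqref{limitBailey} with the various $(\rho_1,\rho_2)$ specializations you describe (your order of these two Bailey-lemma steps is reversed from the paper's, but the outcome is the same), and re-indexes to reach the stated $f_{a,b,c}$. For step~(iv), rather than splitting lattices by hand, the paper invokes the explicit Hickerson--Mortenson formulas (Theorems~\ref{hm2} and~\ref{hm3} here), simplifies via \eqref{m1}--\eqref{m3}, and then disposes of the residual theta-quotient identities by a finite computer verification using Garvan's packages.
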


\begin{theorem} \label{mockthm3}
The following are mock theta functions.
\begin{align}
\mathcal{M}_{13}(q) &:= \sum_{n \geq 1} \sum_{n \geq j \geq 1} \frac{(-1)^jq^{n^2+\binom{j}{2}}}{(q)_{n-j}(q)_{j-1}(1-q^{2j-1})} \nonumber \\
&= -\frac{q}{(q)_{\infty}}f_{3,7,3}(q^4,q^7,q) \nonumber \\
&= m(-q^{59}, q^{120}, q^{-9}) - q^{-7} m(-q^{19}, q^{120}, q^{-9}) - q^{-4} m(-q^{29}, q^{120}, q^9) \nonumber \\
&- q^{-10} m(-q^{11}, q^{120}, q^{-9}) - q^{-8}\frac{J_{12,48}J_{3,40}J_{16,40}\overline{J}_{17,40}J_{2,20}\overline{J}_{4,40}J_{14,40}J_{40}^2}{J_1J_{9,120}\overline{J}_{10,40}\overline{J}_{2,40}J_{20}^3J_{80}} \nonumber \\
&-q^{-9}\frac{J_{12,48}J_{3,40}J_{16,40}\overline{J}_{17,40}J_{2,20}\overline{J}_{16,40}J_{17,40}^2\overline{J}_{3,40}^2}{J_1J_{9,120}\overline{J}_{10,40}\overline{J}_{2,40}J_{20}^3J_{40}J_{80}} \nonumber \\
&+ q^{-2}\frac{J_{24,48}J_{3,40}J_{4,40}\overline{J}_{3,40}J_{8,20}\overline{J}_{4,40}J_{14,40}J_{80}}{J_1J_{9,120}\overline{J}_{10,40}\overline{J}_{2,40}J_{20}^2J_{40}} \nonumber \\
&+ q^{-10}\frac{J_{24,48}J_{3,40}J_{4,40}\overline{J}_{3,40}J_{8,20}\overline{J}_{16,40}J_{34,80}^2}{J_1J_{9,120}\overline{J}_{10,40}\overline{J}_{2,40}J_{20}^2J_{80}},  \label{mock3-1} \\
\mathcal{M}_{14}(q) 
&:= \sum_{n \geq 1} \sum_{n \geq j \geq 1} \frac{(-1)_n(-1)^jq^{\binom{n+1}{2}+\binom{j}{2}}}{(q)_{n-j}(q)_{j-1}(1-q^{2j-1})} \nonumber \\
&= -\frac{2q(-q)_{\infty}}{(q)_{\infty}}f_{1,3,1}(q^3,q^6,q^2) \nonumber \\
&= 2m(-q^7, q^{16}, q^{-3}) - 2\frac{J_{4,8}J_{16,32}J_{1,16}J_{14,32}}{J_{1,2}\overline{J}_{2,16}\overline{J}_{4,16}},  \label{mock3-2} \\
\mathcal{M}_{15}(q) 
&:=2{\sum\limits_{n \geq 1}}^{*} \sum_{n \geq j \geq 1} \frac{(q;q^2)_n(-1)^{n+j}q^{\binom{j}{2}}}{(q)_{n-j}(q)_{j-1}(1-q^{2j-1})} \nonumber \\
&= \frac{(q;q^2)_{\infty}}{(q^2;q^2)_{\infty}}f_{1,5,1}(-q,-q^4,q) \nonumber \\
&= 2m(-q^{13}, q^{24}, q^{2}) - 2q^{-1} m(-q^5, q^{24}, q^{2}) + \frac{J_1J_{1,8}J_{6,16}}{J_2J_{16}}, \\
\mathcal{M}_{16}(q) 
&:= \sum_{n \geq 1} \sum_{n \geq j \geq 1} \frac{(q;q^2)_n(-1)^{n+j}q^{n^2+j^2-j}}{(q^2;q^2)_{n-j}(q^2;q^2)_{j-1}(1-q^{4j-2})} \nonumber \\
&=  \frac{q(q;q^2)_{\infty}}{(q^2;q^2)_{\infty}}f_{1,3,1}(-q^5,-q^{11},q^4) \nonumber \\
&= - q^{-1} m(-q^8, q^{32}, q^{-6}) + \frac{1}{2} + \frac{J_{32}^3 J_{10,32}  \overline{J}_{6,32}}{J_{6,32} J_{16,32} \overline{J}_{0,32} \overline{J}_{10,32}} + q^{-1}\frac{J_{8,16}J_{32,64}J_{4,32}J_{24,64}}{\overline{J}_{1,4}\overline{J}_{2,32}\overline{J}_{10,32}}, \label{mock3-4} \\
\mathcal{M}_{17}(q) 
&:= \sum_{n \geq 1} \sum_{n \geq j \geq 1}\frac{(-1)_{2n}(-1)^{j}q^{n+j^2-j}}{(q^2;q^2)_{n-j}(q^2;q^2)_{j-1}(1-q^{4j-2})} \nonumber \\ 
&= -\frac{2q(-q)_{\infty}}{(q)_{\infty}}f_{1,5,1}(q^3,q^9,q^2) \nonumber \\
& =  2m(-q^{22}, q^{48}, q^{-6}) + 2q^{-1} m(-q^{14}, q^{48}, q^{-6}) - 2q^{3}\frac{J_{8,32}J_{20,48}\overline{J}_{18,48}J_{2,24}\overline{J}_{4,48}J_{12,48}J_{48}^2}{J_{1,2}\overline{J}_{16,48}\overline{J}_{8,48}J_{24}^3J_{96}} \nonumber \\ 
&- 2q^{-1}\frac{J_{8,32}J_{20,48}\overline{J}_{18,48}J_{2,24}\overline{J}_{20,48}J_{18,48}^2\overline{J}_{6,48}^2}{J_{1,2}\overline{J}_{16,48}\overline{J}_{8,48}J_{24}^3J_{48}J_{96}} + 2q^{10}\frac{J_{16,32}J_{4,48}\overline{J}_{6,48}J_{10,24}\overline{J}_{4,48}J_{12,48}J_{96}}{J_{1,2}\overline{J}_{16,48}\overline{J}_{8,48}J_{24}^2J_{48}} \nonumber  \\
&+ 2\frac{J_{16,32}J_{4,48}\overline{J}_{6,48}J_{10,24}\overline{J}_{20,48}J_{36,96}^2}{J_{1,2}\overline{J}_{16,48}\overline{J}_{8,48}J_{24}^2J_{96}}, \label{mock3-5} \\
\mathcal{M}_{18}(q) 
&:= \sum_{n \geq 0} \sum_{n \geq j \geq 0} \frac{(-1)^jq^{n^2+n+\binom{j}{2}}}{(q)_{n-j}(q)_{j}(1-q^{2j+1})} \nonumber \\
&= \frac{1}{(q)_{\infty}}f_{3,7,3}(q^2,q^5,q) \nonumber \\ 
&=  m(-q^{67}, q^{120}, q^{-9}) + q^{-9} m(-q^{13}, q^{120}, q^9) - q^{-2} m(-q^{37}, q^{120}, q^9) \nonumber \\
&- q^{-1} m(-q^{43}, q^{120}, q^{-9}) + q^{-7}\frac{J_{12,48}J_{3,40}J_{8,40}\overline{J}_{17,40}J_{6,20}\overline{J}_{12,40}J_{14,40}J_{40}^2}{J_1J_{9,120}\overline{J}_{18,40}\overline{J}_{6,40}J_{20}^3J_{80}} \nonumber \\
&+ q^{-4}\frac{J_{12,48}J_{3,40}J_{8,40}\overline{J}_{17,40}J_{6,20}\overline{J}_{8,40}J_{17,40}^2\overline{J}_{3,40}^2}{J_1J_{9,120}\overline{J}_{18,40}\overline{J}_{6,40}J_{20}^3J_{40}J_{80}} \nonumber \\
&- q^{-3}\frac{J_{24,48}J_{3,40}J_{12,40}\overline{J}_{3,40}J_{4,20}\overline{J}_{12,40}J_{14,40}J_{80}}{J_1J_{9,120}\overline{J}_{18,40}\overline{J}_{6,40}J_{20}^2J_{40}} \nonumber \\
&- q^{-7}\frac{J_{24,48}J_{3,40}J_{12,40}\overline{J}_{3,40}J_{4,20}\overline{J}_{8,40}J_{34,80}^2}{J_1J_{9,120}\overline{J}_{18,40}\overline{J}_{6,40}J_{20}^2J_{80}},  \label{mock3-6} \\
\mathcal{M}_{19}(q) 
&:= \sum_{n \geq 0} \sum_{n \geq j \geq 0} \frac{(-q)_n(-1)^jq^{\binom{n+1}{2}+\binom{j}{2}}}{(q)_{n-j}(q)_{j}(1-q^{2j+1})} \nonumber \\
&= \frac{(-q)_{\infty}}{(q)_{\infty}}f_{1,3,1}(q,q^4,q^2) \nonumber \\
&=  m(-q^{11}, q^{16}, q^{-3}) + q\frac{J_{4,8}J_{16,32}J_{5,16}J_{6,32}}{J_{1,2}\overline{J}_{8,16}\overline{J}_{2,16}}. \label{mock3-7}
\end{align}
\end{theorem}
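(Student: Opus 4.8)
The plan is to establish, for each $i\in\{13,\dots,19\}$, the three displayed equalities in turn: the $q$-hypergeometric double sum equals a constant multiple of a Hecke-type indefinite theta series $f_{a,b,c}$, and that $f_{a,b,c}$ in turn equals the exhibited combination of Appell--Lerch series and theta quotients. Once these are in hand, the fact that $\mathcal{M}_i(q)$ is a mock theta function follows from the mock-modularity of $m(x,q,z)$ recalled in the introduction together with the modularity of the theta quotients.

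The first ingredient is a supply of Bailey pairs. Each $\mathcal{M}_i(q)$ here is built from a Bailey pair relative to $1$ or $q$ obtained by applying Theorem \ref{main1} or Theorem \ref{main3} to a classical Bailey pair of Slater \cite{Sl1} (for $\mathcal{M}_{16}$ and $\mathcal{M}_{17}$ one first replaces $q$ by $q^2$), exactly as in the recovery of \eqref{a2n}--\eqref{betan} and as recorded in Corollaries \ref{paircor1}--\ref{paircor3}. The key structural feature is that in such a pair $\beta_n$ is a single $q$-factorial quotient carrying a factor $1/(1-q^{2j\pm1})$, while $\alpha_n$, separated according to the parity of its index, is an explicit \emph{indefinite} sum over an auxiliary variable $j$.

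Next I would put the pair through one step of the Bailey lemma \eqref{alphaprimedef}--\eqref{betaprimedef} with $(\rho_1,\rho_2)$ chosen to generate the ``mock-inducing'' factor in the summand --- $\rho_1,\rho_2\to\infty$ in the plainest cases, $\rho_1=-1$ when a $(-q)_n$ or $(-1)_n$ is wanted in the numerator, and $\rho_1=q^{1/2},\rho_2=-q^{1/2}$ to produce a $(q;q^2)_n$ --- and then substitute the resulting $(\alpha'_n,\beta'_n)$ into the limiting form \eqref{limitBailey} with $\rho_1,\rho_2\to\infty$. On the left the inner $j$-sum is precisely the $\beta'_n$ of the Bailey step acting on the single-term $\beta_n$, so the left side collapses to the defining double sum for $\mathcal{M}_i(q)$; on the right one gets the advertised prefactor $\tfrac1{(q)_\infty}$, $\tfrac{(q;q^2)_\infty}{(q^2;q^2)_\infty}$, or $\tfrac{(-q)_\infty}{(q)_\infty}$, times $\sum_{n\ge0}q^{n^2}\alpha'_n$ (in the variable $q^2$ for $\mathcal{M}_{16}$, $\mathcal{M}_{17}$). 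For $\mathcal{M}_{15}$ this outer sum converges only conditionally, and one works with the average of its even and odd partial sums throughout --- this is the content of the $\sum^{*}$ notation.

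It then remains to identify $\sum_{n\ge0}q^{n^2}\alpha'_n$ with the claimed $f_{a,b,c}$, and to rewrite that $f_{a,b,c}$ in Appell--Lerch form. For the first: split into even- and odd-index terms, insert the explicit inner sums, and reindex the resulting double sum over the parity index and over $j$ into the canonical shape $\bigl(\sum_{r,s\ge0}-\sum_{r,s<0}\bigr)(-1)^{r+s}x^ry^sq^{a\binom r2+brs+c\binom s2}$; matching the quadratic and linear exponents reads off $(a,b,c)$ and $(x,y)$, giving $f_{3,7,3}$, $f_{1,3,1}$, or $f_{1,5,1}$ with the stated arguments. For the second, one applies the known Hecke-type-to-Appell--Lerch reduction formulas (as used for the $f_{1,3,1}$ and $f_{3,5,3}$ cases in \cite{Lo-Os1} and the references there) with the specific parameters. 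I expect the bulk of the work, and the main obstacle, to lie in this last stage: the even/odd splitting and the cancellation between the two inner sums, the conditional convergence in $\mathcal{M}_{15}$, and especially --- for the $f_{3,7,3}$ series, whose larger middle coefficient produces more Appell--Lerch and theta pieces than the $f_{3,5,3}$ of \cite{Lo-Os1} --- the number of theta quotients that must be tracked and simplified.
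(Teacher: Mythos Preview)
Your overall strategy is exactly the paper's: start from the Bailey pairs of Corollary~\ref{paircor3} (with $q\mapsto q^2$ for $\mathcal{M}_{16},\mathcal{M}_{17}$), iterate the Bailey lemma once, insert into \eqref{limitBailey}, rewrite the $\alpha$-side as an $f_{a,b,c}$, and finish with the Hickerson--Mortenson formulas (Theorems~\ref{hm1}--\ref{hm3} here) plus Proposition~\ref{mprops}.

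There is one slip in your description, however, and it matters. You say to apply the Bailey-lemma step \eqref{alphaprimedef}--\eqref{betaprimedef} with the ``mock-inducing'' choice of $(\rho_1,\rho_2)$ (e.g.\ $\rho_1=-1$ for the $(-1)_n$ in $\mathcal{M}_{14}$) and \emph{then} use \eqref{limitBailey} with $\rho_1,\rho_2\to\infty$. The paper does the opposite: the preliminary Bailey-lemma step is always taken with $\rho_1,\rho_2\to\infty$ (this is what produces the inner $j$-sum with the $(-1)^jq^{\binom{j}{2}}$ weight, i.e.\ the $\beta'_n$ in \eqref{b3n'}), and the varying parameters $(\rho_1,\rho_2)=(\infty,\infty)$, $(-1,\infty)$, $(\sqrt q,-\sqrt q)$, $(q,\infty)$, $(-1,-q)$ are used in \eqref{limitBailey} to generate the \emph{outer} factors $q^{n^2}$, $(-1)_nq^{\binom{n+1}{2}}$, $(q;q^2)_n$, etc. With your stated order the factor $(-1)_j$ (or $(q;q^2)_j$) would land on the \emph{inner} variable and the left side would not be $\mathcal{M}_i(q)$ at all. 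Your own later sentence ``the inner $j$-sum is precisely the $\beta'_n$ of the Bailey step acting on the single-term $\beta_n$'' is in fact consistent with the paper's order, not with the order you wrote just before; so this looks like a write-up inconsistency rather than a conceptual error, but it should be fixed.
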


It will have been noticed that some of the expressions in Theorems \ref{mockthm1}--\ref{mockthm3} are considerably more involved than others.   For instance, equation \eqref{mock3-6} involves four Appell-Lerch series and four modular forms while equation \eqref{mock3-7} involves only one of each. This depends on the indefinite theta function $f_{n,n+p,n}(x,y,q)$. In general, the number of Appell-Lerch series grows with $n$ and the number of modular forms grows with $p$.    

The final goal of the paper is to give identities involving some of the double sums in Theorems \ref{mockthm1}--\ref{mockthm3} and ``classical" mock theta functions.  Namely, we express the double sums $\mathcal{M}_{2}(q)$, $\mathcal{M}_{5}(q)$, $\mathcal{M}_{9}(q)$, and $\mathcal{M}_{16}(q)$ in terms of the mock theta functions

$$
T_{0}(q) := \sum_{n \geq 0} \frac{q^{(n+1)(n+2)} (-q^2;q^2)_{n}}{(-q; q^2)_{n+1}},
$$

$$
\omega(q) := \sum_{n \geq 0} \frac{q^{2n(n+1)}}{(q;q^2)_{n+1}^2},
$$

$$
A(q) := \sum_{n \geq 0} \frac{q^{n+1} (-q^2; q^2)_n}{(q;q^2)_{n+1}},
$$

\noindent and

$$
U_1(q) := \sum_{n \geq 0} \frac{q^{(n+1)^2} (-q;q^2)_n}{(-q^2;q^4)_{n+1}},
$$

\noindent of ``orders" 8, 3, 2, and 8, respectively (see \cite{GM1}). A similar identity was found in \cite{Lo-Os1}, namely

\begin{equation*} \label{id0}
\mathcal{W}_{2}(q)= 2q T_1(q) -q S_{1}(q)
\end{equation*}

\noindent where

$$
S_1(q) := \sum_{n \geq 0} \frac{q^{n(n+2)} (-q; q^2)_n}{(-q^2; q^2)_n}
$$

\noindent and

$$
T_1(q) := \sum_{n \geq 0} \frac{q^{n(n+1)} (-q^2; q^2)_n}{(-q; q^2)_{n+1}}
$$

\noindent are mock theta functions of order 8 \cite{GM1}.

\begin{corollary} \label{mockid} We have the following identities. 
\begin{align}
\mathcal{M}_2(q) &= 4T_0(q) + 2 - 2\frac{J_8^3J_{4,8}}{J_{2,8}^2\overline{J}_{1,8}} +  \frac{J_{2,4}J_{8,16}J_{1,8}J_{6,16}}{\overline{J}_{1,4}\overline{J}_{1,8}\overline{J}_{3,8}}, \\ \label{id1}
\mathcal{M}_5(q) &= 1 +q\omega(q), \\ \label{id2}
\mathcal{M}_{9}(q) &= -A(-q^{8}) + \frac{J_{32}^3J_{14,32}\overline{J}_{10,32}}{J_{16,32}J_{2,32}\overline{J}_{6,32}\overline{J}_{8,32}} - q^{-1}\frac{J_{64}^2J_{28,64}}{J_{32}J_{4,64}} + q^{-1}\frac{J_{8,16}J_{32,64}J_{4,32}J_{24,64}}{\overline{J}_{1,4}\overline{J}_{6,32}\overline{J}_{2,32}}, \\  \label{id3}
\mathcal{M}_{16}(q) &= \frac{1}{2} + q^{-1} U_1(q^8) - q^{-1} \frac{J_{32}^3 \overline{J}_{10,32} J_{14,32}}{\overline{J}_{16,32} J_{6,32} J_{8,32} \overline{J}_{2,32}}
+ \frac{J_{32}^3 J_{10,32} \overline{J}_{6,32}}{J_{6,32} J_{16,32} \overline{J}_{0,32} \overline{J}_{10,32}} \\
& + q^{-1} \frac{J_{8,16} J_{32,64} J_{4,32} J_{24,64}}{\overline{J}_{1,4} \overline{J}_{2,32} \overline{J}_{10,32}}.  \label{id4}
\end{align}
\end{corollary}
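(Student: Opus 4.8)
The plan is to prove the four identities of Corollary~\ref{mockid} by matching Appell--Lerch series. Theorems~\ref{mockthm1}--\ref{mockthm3} already express each of $\mathcal{M}_{2}(q)$, $\mathcal{M}_{5}(q)$, $\mathcal{M}_{9}(q)$ and $\mathcal{M}_{16}(q)$ as a short $\mathbb{Z}$-linear combination of Appell--Lerch series $m(x,q^{k},z)$ together with an explicit quotient of theta functions, and each of the classical functions $T_{0}$, $\omega$, $A$, $U_{1}$ admits a representation of exactly this shape; the latter can be read off from \cite{GM1} (or from the treatment of universal mock theta functions due to Hickerson and Mortenson). So one would: (i) write down the Appell--Lerch form of $T_{0}(q)$, $q\,\omega(q)$, $A(-q^{8})$ and $U_{1}(q^{8})$; (ii) use the standard functional equations for $m(x,q,z)$ to transform its arguments into exactly those appearing in the relevant $\mathcal{M}_{i}$; and (iii) cancel the matching Appell--Lerch series, after which each identity collapses to an equality between finite products of theta functions $j(x,q)$, to be verified directly. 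This is the route taken in \cite{Lo-Os1} to prove $\mathcal{W}_{2}(q)=2qT_{1}(q)-qS_{1}(q)$. No new input about modularity is needed: that $\mathcal{M}_{2},\mathcal{M}_{5},\mathcal{M}_{9},\mathcal{M}_{16}$ are mock theta functions is already contained in Theorems~\ref{mockthm1}--\ref{mockthm3}, so the Corollary is purely a matter of making the identifications explicit.

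The main tool for step (ii) is the change-of-variable formula
\begin{equation*}
m(x,q,z_{1})-m(x,q,z_{0})=\frac{z_{0}\,J_{1}^{3}\,j(z_{1}/z_{0},q)\,j(xz_{0}z_{1},q)}{j(z_{0},q)\,j(z_{1},q)\,j(xz_{0},q)\,j(xz_{1},q)},
\end{equation*}
together with the symmetry $m(x,q,z)=x^{-1}m(x^{-1},q,z^{-1})$ and the shift relation expressing $m(q^{\ell}x,q,z)$ through $m(x,q,z)$ and a finite correction sum (the latter accounting for the additive constants $1$, $-2$, $\tfrac12$ on the right-hand sides of the Corollary). Since the double sums $\mathcal{M}_{2}$, $\mathcal{M}_{9}$, $\mathcal{M}_{16}$ carry Appell--Lerch series with nome $q^{8}$ and $\mathcal{M}_{5}$ with nome $q^{6}$, while $T_{0}$, $\omega$, $A$, $U_{1}$ are functions of $q$, one also has to move between $m(x,q,z)$ and $m(x^{n},q^{n},z^{n})$; this is exactly why the identities for $\mathcal{M}_{9}$ and $\mathcal{M}_{16}$ feature $A(-q^{8})$ and $U_{1}(q^{8})$ rather than $A(q)$ and $U_{1}(q)$, and it controls which theta dissections must be used.

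Among the four, the identity for $\mathcal{M}_{5}(q)$ is the easiest: the index data $f_{1,2,1}$ means a single Appell--Lerch series occurs on each side, so once one matches $2m(q^{5},q^{6},q^{2})$ only a one-line theta identity remains. (This case can in fact be done without Appell--Lerch series at all, starting from $\mathcal{M}_{5}(q)=\tfrac{(-q)_{\infty}}{(q)_{\infty}}f_{1,2,1}(q,q^{3},q^{2})$ and a Hecke-type double sum representation of $\omega(q)$, and using the symmetry $f_{a,b,c}(x,y,q)=f_{c,b,a}(y,x,q)$ together with an index shift.) The identities for $\mathcal{M}_{2}$, $\mathcal{M}_{9}$ and $\mathcal{M}_{16}$ are the laborious ones, since several Appell--Lerch series and several theta quotients must be reconciled simultaneously.

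The hard part will be step (iii). After the mock pieces are cancelled, each identity becomes an equality of quotients of theta functions of fairly high level (with moduli as large as $64$ in these cases), and checking such an equality by hand is delicate. The usual approach is to clear denominators and repeatedly dissect each $j(x,q^{m})$ according to residues of the exponent modulo a suitable integer until the two sides agree term by term, or to invoke a Weierstrass-type (partial fraction) theta identity; the standard theta-function manipulation identities, such as those collected in \cite{Lo-Os1}, are what one needs for the bookkeeping. As a sanity check one compares the first several coefficients in the $q$-expansions of both sides of each identity.
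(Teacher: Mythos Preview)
Your proposal is correct and follows essentially the same route as the paper: take the Appell--Lerch expressions for $\mathcal{M}_{2},\mathcal{M}_{5},\mathcal{M}_{9},\mathcal{M}_{16}$ from Theorems~\ref{mockthm1}--\ref{mockthm3}, quote the Appell--Lerch forms of $T_{0},\omega,A,U_{1}$ from Hickerson--Mortenson, and use \eqref{m1}--\eqref{m3} to match, leaving only a residual theta-function identity. The only practical difference is that the paper dispatches the residual modular identities by computer (Garvan's packages) rather than by the hand dissections you outline, and that no genuine ``move between $m(x,q,z)$ and $m(x^{n},q^{n},z^{n})$'' is needed---one simply substitutes $q\mapsto -q^{8}$ or $q\mapsto q^{8}$ in the known formulas for $A$ and $U_{1}$ and then applies \eqref{m3} to adjust the $z$-variable.
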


The paper is organized as follows.  In Section 2, we prove Theorems \ref{main1} and \ref{main2} and record some corollaries.  In Section 3, we establish a change of base lemma and deduce Andrews' Bailey pairs from those of Bringmann and Kane.  In Section 4, we recall important work of Hickerson and Mortenson on mock theta functions \cite{Hi-Mo1} and then prove Theorems \ref{mockthm1}--\ref{mockthm3} and Corollary \ref{mockid}.  

In \cite{realLo-Os}, we consider applications of Theorems \ref{main1}--\ref{main3} to $q$-hypergeometric double sums related to real quadratic fields, in the spirit of \cite{adh}.

\section{Proofs of Theorems \ref{main1} and \ref{main2}}
\begin{proof}[Proof of Theorem \ref{main1}]
First note that the sequence $\alpha_n'$ in (\ref{aeven}) and (\ref{aodd}) is uniquely defined by $\alpha_0' = 0$, $\alpha_1' = -(1-q^2)$, and
\begin{equation} \label{uniquedef}
\frac{\alpha_{n+2}'}{1-q^{2n+4}} - \frac{q^{2n}\alpha_n'}{1-q^{2n}} = -\alpha_{n+1}.
\end{equation}
Suppose that the $\beta_n'$ are given by (\ref{b1}). Then the corresponding $\alpha_n'$ satisfy the initial conditions.   Moreover, using \eqref{pairdefbis} we have
\begin{eqnarray*}
\frac{\alpha_{n+2}'}{1-q^{2n+4}} - \frac{q^{2n}\alpha_n'}{1-q^{2n}} &=& \sum_{j=1}^{n+2}\frac{(q)_{n+j+1}q^{\binom{n-j+2}{2}}(-1)^{n+j}}{(q)_{n-j+2}}\beta_j' - \sum_{j=1}^{n}\frac{(q)_{n+j-1}q^{\binom{n-j}{2}+2n}(-1)^{n+j}}{(q)_{n-j}}\beta_j' \\
&=& \sum_{j=1}^{n+2} \frac{(q)_{n+j-1}(-1)^{n+j}q^{\binom{n-j}{2}+2n}}{(q)_{n-j+2}}\Big((1-q^{n+j})(1-q^{n+j+1})q^{-2j+1}  \\ &\phantom{-}& \hskip2.5in -(1-q^{n-j+2})(1-q^{n-j+1})\Big)\beta_j' \\ 
&=& -\sum_{j=1}^{n+2}\frac{q^{\binom{n-j}{2}+2n}(-1)^{n+j}(q)_{n+j-1}}{(q)_{n-j+2}}\left((1-q^{2n+2})(1-q^{-2j+1})\right) \beta_j'\\
&=& (1-q^{2n+2})\sum_{j=0}^{n+1} \frac{q^{\binom{n-j-1}{2}+2n-2j-1}(q)_{n+j}(-1)^{n+j+1}}{(q)_{n-j+1}}(1-q^{2j+1})\beta_{j+1}' \\
&=& -(1-q^{2n+2})\sum_{j=0}^{n+1}\frac{q^{\binom{n+1-j}{2}}(q)_{n+j}(-1)^{n+j+1}}{(q)_{n+1-j}} \beta_j \\
&=& - \alpha_{n+1}.
\end{eqnarray*}
\end{proof}

\begin{proof}[Proof of Theorem \ref{main2}]
Let $a=q$ and let $\beta'_n$ be defined as in (\ref{b2}).  Then
\begin{eqnarray*}
\alpha'_n &=& \frac{(-1)^n}{1-q}\sum_{j=0}^n \frac{(q)_{n+j}(-1)^jq^{\binom{n-j}{2}}}{(q)_{n-j}}\beta'_j\left(1-q^{n+j+1} + q^{n+j+1}(1-q^{n-j})\right) \\
&=& \frac{(-1)^n}{1-q}\left(\sum_{j=0}^n\frac{(q)_{n+j+1}(-1)^jq^{\binom{n-j}{2}}}{(q)_{n-j}}\beta'_j + \sum_{j=0}^{n-1}\frac{(q)_{n+j}(-1)^jq^{\binom{n-j}{2}+n+j+1}}{(q)_{n-j-1}}\beta'_j\right) \\
&=& \frac{1}{1-q}\left(-\sum_{j=1}^{n+1}\frac{(q)_{n+j}(-1)^{n+j+1}q^{\binom{n-j+1}{2}}}{(q)_{n-j+1}}\beta_j + \sum_{j=1}^{n}\frac{(q)_{n+j-1}(-1)^{n-j}q^{\binom{n-j}{2}+2n}}{(q)_{n-j}}\beta_j\right) \\
&=& \frac{1}{1-q}\left(-\frac{\alpha_{n+1}}{1-q^{2n+2}} + \frac{q^{2n}\alpha_n}{1-q^{2n}}\right),
\end{eqnarray*}
which establishes the result.
\end{proof}
Note that the proof of Theorem \ref{main1} implies an inverse result.   Since the $\alpha_n'$ are uniquely defined by the $\alpha_n$ in \eqref{uniquedef} together with the initial conditions $\alpha_0' = 0$ and $\alpha_1' = -(1-q^2)$, we have the following.
\begin{theorem} \label{main1inverse}
If $(\alpha_n',\beta_n')$ form a Bailey pair relative to $1$ with $\alpha_0' = 0$ and $\alpha_1' = -(1-q^2)$, then $(\alpha_n,\beta_n)$
also form a Bailey pair relative to $1$, where $\alpha_0 = \beta_0 = 1$,
\begin{equation}
\alpha_n = \frac{-1}{1-q^{2n+2}}\alpha_{n+1}' + \frac{q^{2n-2}}{1-q^{2n-2}}\alpha_{n-1}',
\end{equation}
and
\begin{equation}
\beta_n = -(1-q^{2n+1})\beta_{n+1}'.
\end{equation}
\end{theorem}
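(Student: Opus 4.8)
The plan is to read the proof of Theorem \ref{main1} backwards rather than substitute the asserted formulas directly into \eqref{pairdefbis}; the essential input is that \eqref{pairdef} is an invertible triangular system, so that within a Bailey pair relative to a fixed $a$ the sequences $\alpha$ and $\beta$ determine one another. Starting from the given Bailey pair $(\alpha_n',\beta_n')$ relative to $1$ with $\alpha_0'=0$ and $\alpha_1'=-(1-q^2)$, I would first define $\beta_n := -(1-q^{2n+1})\beta_{n+1}'$ for $n\ge 0$ (so that $\beta_0 = -(1-q)\beta_1'$, which we will see equals $1$), and let $(\alpha_n)$ be the unique sequence for which $(\alpha_n,\beta_n)$ is a Bailey pair relative to $1$. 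Since $\alpha_0=\beta_0=1$, Theorem \ref{main1} applies to $(\alpha_n,\beta_n)$ and produces a Bailey pair $(\widetilde\alpha_n,\widetilde\beta_n)$ relative to $1$.

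The first step is to identify $(\widetilde\alpha_n,\widetilde\beta_n)$ with $(\alpha_n',\beta_n')$. For $n\ge 1$, \eqref{b1} gives $\widetilde\beta_n = -\beta_{n-1}/(1-q^{2n-1}) = \beta_n'$, while $\widetilde\beta_0 = 0 = \beta_0'$, the latter because $\beta_0' = \alpha_0'/(q)_0^2 = 0$ in any Bailey pair relative to $1$. Thus the two pairs share a $\beta$-sequence, and by the invertibility just mentioned $\widetilde\alpha_n = \alpha_n'$ for all $n$. The second step is to read off $\alpha_n$: the opening sentence of the proof of Theorem \ref{main1} says that $\widetilde\alpha_n$ is uniquely determined by $\widetilde\alpha_0=0$, $\widetilde\alpha_1=-(1-q^2)$, and the recurrence \eqref{uniquedef} (in which ``$\alpha$'' is our $\alpha_n$). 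Substituting $\widetilde\alpha_n=\alpha_n'$, solving \eqref{uniquedef} for $\alpha_{n+1}$, and replacing $n$ by $n-1$ yields exactly $\alpha_n = -\alpha_{n+1}'/(1-q^{2n+2}) + q^{2n-2}\alpha_{n-1}'/(1-q^{2n-2})$ for $n\ge 1$, together with $\alpha_0=1$. Since $(\alpha_n,\beta_n)$ was constructed to be a Bailey pair relative to $1$ and now has the asserted entries, the proof is complete.

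The main point that requires care, and which I would settle first, is the compatibility of the normalizations at the bottom of the recursion. One must check that the definition $\beta_0 = -(1-q)\beta_1'$ indeed gives $1$: expanding $\alpha_1'$ via \eqref{pairdefbis} and using $\beta_0'=0$ gives $\alpha_1' = (1-q^2)(1-q)\beta_1'$, so the hypothesis $\alpha_1' = -(1-q^2)$ forces $\beta_1' = -1/(1-q)$ and hence $\beta_0 = 1$; this is also what makes $\widetilde\alpha_1 = -(1-q^2)$ agree with the hypothesis on $\alpha_1'$. One must also note that in the displayed formula for $\alpha_n$ the term $q^{2n-2}\alpha_{n-1}'/(1-q^{2n-2})$ at $n=1$ is to be read as $0$, since it carries the factor $\alpha_0'=0$ (the same $0/0$ convention already implicit in \eqref{uniquedef} at $n=0$), so that $\alpha_1 = -\alpha_2'/(1-q^4)$, consistent with \eqref{aodd} run in reverse. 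Beyond this bookkeeping, no computation is required that is not already present in the proof of Theorem \ref{main1}.
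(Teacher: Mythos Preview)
Your argument is correct and is exactly the approach the paper takes: the paper's proof of Theorem \ref{main1inverse} is simply the one-sentence remark that the $\alpha_n'$ in Theorem \ref{main1} are uniquely determined by the initial conditions and the recurrence \eqref{uniquedef}, so the construction can be inverted. You have spelled out carefully the bookkeeping (in particular the verification that $\beta_0=1$ via $\alpha_1'=(1-q^2)(1-q)\beta_1'$, and the $0/0$ convention at $n=1$) that the paper leaves implicit.
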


We finish this section with three corollaries of Theorems \ref{main1} and \ref{main3}, giving three sets of two Bailey pairs involving indefinite quadratic forms.  These come from three Bailey pairs in Slater's list \cite{Sl1}.  These are not the only three pairs from Slater's list which lead to indefinite quadratic forms in Theorems \ref{main1} and \ref{main3}, but we have limited ourselves to those we will use in the sequel.

First, on p. 468 of \cite{Sl1} we find the Bailey pair relative to $1$,

\begin{equation*}
\alpha_n = 
\begin{cases}
1, &\text{if $n=0$}, \\
(-1)^n(q^n + q^{-n}), &\text{otherwise},
\end{cases}
\end{equation*}

\noindent and

\begin{equation*}
\beta_n = \frac{(-1)^nq^{-n}}{(q^2;q^2)_n}.
\end{equation*}

\noindent Applying Theorems \ref{main1} and \ref{main3} we have the following.  

\begin{corollary} \label{paircor1} 
The sequences $(a_n,b_n)$ form a Bailey pair relative to $1$, where

\begin{equation} \label{aevenslater1}
a_{2n} = (1-q^{4n})q^{2n^2-2n+1}\sum_{j=-n}^{n-1}q^{-2j^2},
\end{equation}

\begin{equation} \label{aoddslater1}
a_{2n+1} = -(1-q^{4n+2})q^{2n^2}\sum_{j=-n}^{n}q^{-2j^2-2j},
\end{equation}

\noindent and

\begin{equation} \label{bslater1}
b_n = 
\begin{cases}
0, &\text{if $n=0$},\\
\frac{(-1)^nq^{-n+1}}{(q^2;q^2)_{n-1}(1-q^{2n-1})}, &\text{otherwise},
\end{cases}
\end{equation}

\noindent and the sequences $(\alpha_n,\beta_n)$ form a Bailey pair relative to $q$, where

\begin{equation} \label{aevenslater1q}
\alpha_{2n} = \frac{1}{1-q}\left(q^{2n^2}\sum_{j=-n}^{n}q^{-2j^2-2j} + q^{2n^2+2n+1}\sum_{j=-n}^{n-1}q^{-2j^2}\right), 
\end{equation}

\begin{equation} \label{aoddslater1q}
\alpha_{2n+1} = -\frac{1}{1-q}\left(q^{2n^2+2n+1}\sum_{j=-n-1}^{n}q^{-2j^2} + q^{2n^2+4n+2}\sum_{j=-n}^{n}q^{-2j^2-2j}\right),
\end{equation}

\noindent and

\begin{equation} \label{bslater1q}
\beta_n = \frac{(-1)^nq^{-n}}{(q^2;q^2)_{n}(1-q^{2n+1})}.
\end{equation}

\end{corollary}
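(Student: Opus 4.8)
The plan is to obtain Corollary \ref{paircor1} by directly feeding the stated Slater Bailey pair relative to $1$ into Theorems \ref{main1} and \ref{main3}, and then simplifying the resulting sums into the closed forms displayed in \eqref{aevenslater1}--\eqref{bslater1q}. Since $\alpha_0 = \beta_0 = 1$ for the Slater pair, both theorems apply verbatim. The $\beta$-parts are immediate: Theorem \ref{main1} gives $b_n = -\beta_{n-1}/(1-q^{2n-1})$ for $n \geq 1$ and $b_0 = 0$, which upon substituting $\beta_{n-1} = (-1)^{n-1}q^{-(n-1)}/(q^2;q^2)_{n-1}$ yields \eqref{bslater1} after collecting signs and the factor $q$; similarly Theorem \ref{main3} gives $\beta_n = \beta_n/(1-q^{2n+1}) = (-1)^nq^{-n}/\big((q^2;q^2)_n(1-q^{2n+1})\big)$, which is \eqref{bslater1q}. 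So the only real content is the $\alpha$-side.

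For the $\alpha$-side I would substitute $\alpha_{2j} = (-1)^{2j}(q^{2j}+q^{-2j}) = q^{2j}+q^{-2j}$ (for $j \geq 1$, with $\alpha_0 = 1$) and $\alpha_{2j+1} = (-1)^{2j+1}(q^{2j+1}+q^{-2j-1}) = -(q^{2j+1}+q^{-2j-1})$ into \eqref{aeven} and \eqref{aodd}. For instance, in \eqref{aodd} one gets
\[
a_{2n+1} = -(1-q^{4n+2})q^{2n^2}\left(1 + \sum_{j=1}^n q^{-2j^2}(q^{2j}+q^{-2j})\right)
= -(1-q^{4n+2})q^{2n^2}\left(1 + \sum_{j=1}^n \big(q^{-2j^2+2j} + q^{-2j^2-2j}\big)\right).
\]
The two sums $\sum_{j=1}^n q^{-2j^2+2j}$ and $\sum_{j=1}^n q^{-2j^2-2j}$ are reindexed (the first via $j \mapsto -j$, using $-2j^2+2j \mapsto -2j^2-2j$) and merged with the constant term $1 = q^0$ to telescope into a single symmetric sum $\sum_{j=-n}^{n} q^{-2j^2-2j}$, giving \eqref{aoddslater1}. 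The case \eqref{aevenslater1} is analogous but starts from \eqref{aeven}, which involves only the odd-indexed $\alpha$'s; after substituting $\alpha_{2j+1} = -(q^{2j+1}+q^{-2j-1})$ and pulling out the overall sign, the inner sum $\sum_{j=0}^{n-1}q^{-2j^2-2j}(q^{2j+1}+q^{-2j-1}) = q\sum_{j=0}^{n-1}(q^{-2j^2} + q^{-2j^2-4j-2})$ is reindexed so that the two halves combine into $\sum_{j=-n}^{n-1}q^{-2j^2}$ (the second half via $j \mapsto -j-1$, sending $-2j^2-4j-2 \mapsto -2j^2$), producing the factor $q^{2n^2-2n+1}$ and the claimed form.

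The same bookkeeping handles Theorem \ref{main3}: one substitutes into the formulas for $\alpha''_{2n}$ and $\alpha''_{2n+1}$, where now \emph{both} parity classes of $\alpha$ appear. Each of the four inner sums (two for $\alpha''_{2n}$, two for $\alpha''_{2n+1}$) splits into two geometric-in-exponent pieces after inserting $q^{\pm 2j}$ or $q^{\pm(2j+1)}$, and each such piece is reindexed by $j \mapsto -j$ or $j \mapsto -j-1$ and absorbed into a symmetric range, exactly matching the patterns $\sum_{-n}^{n}q^{-2j^2-2j}$, $\sum_{-n}^{n-1}q^{-2j^2}$, $\sum_{-n-1}^{n}q^{-2j^2}$, and $\sum_{-n}^{n}q^{-2j^2-2j}$ in \eqref{aevenslater1q}--\eqref{aoddslater1q}; the extra powers of $q$ (such as the $q^{2n+1}$ and $q^{4n+2}$) come out as the prefactors of the reindexing shifts. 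The main obstacle is purely organizational — keeping the four reindexing substitutions, the boundary terms $j=0$ and $j=n$, and the signs straight so that everything collapses cleanly into the stated symmetric sums — but there is no structural difficulty: it is a routine (if fiddly) verification that the hypotheses of Theorems \ref{main1} and \ref{main3} are met and that the output sums telescope as claimed.
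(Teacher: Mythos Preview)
Your proposal is correct and follows exactly the paper's approach: the paper simply states that Corollary \ref{paircor1} follows by applying Theorems \ref{main1} and \ref{main3} to the Slater pair $\alpha_n = (-1)^n(q^n+q^{-n})$, $\beta_n = (-1)^nq^{-n}/(q^2;q^2)_n$, without spelling out the reindexing. Your write-up supplies precisely those routine details, and the computations (in particular the substitutions $j\mapsto -j$ and $j\mapsto -j-1$ that collapse the split sums into symmetric ranges) are all correct.
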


Next, on p. 468 of \cite{Sl1} we find the Bailey pair relative to $1$,
\begin{equation*}
\alpha_n = 
\begin{cases}
1, &\text{if $n=0$}, \\
(-1)^nq^{-\binom{n+1}{2}}(1+q^n), &\text{otherwise},
\end{cases}
\end{equation*}
and
\begin{equation*}
\beta_n = \frac{(-1)^nq^{-\binom{n+1}{2}}}{(q)_n}.
\end{equation*}

Applying Theorems \ref{main1} and \ref{main3} we have the following.   

\begin{corollary} \label{paircor2}
The sequences $(a_n,b_n)$ form a Bailey pair relative to $1$, where

\begin{equation} \label{aevenslater2}
a_{2n} = (1-q^{4n})q^{2n^2-2n}\sum_{j=-n}^{n-1}q^{-4j^2-3j},
\end{equation}

\begin{equation} \label{aoddslater2}
a_{2n+1} = -(1-q^{4n+2})q^{2n^2}\sum_{j=-n}^{n}q^{-4j^2-j},
\end{equation}

\noindent and

\begin{equation} \label{bslater2}
b_n = 
\begin{cases}
0, &\text{if $n=0$},\\
\frac{(-1)^nq^{-\binom{n}{2}}}{(q)_{n-1}(1-q^{2n-1})}, &\text{otherwise},
\end{cases}
\end{equation}

\noindent and the sequences $(\alpha_n,\beta_n)$ form a Bailey pair relative to $q$, where

\begin{equation} \label{aevenslater2q}
\alpha_{2n} = \frac{1}{1-q}\left(q^{2n^2}\sum_{j=-n}^{n}q^{-4j^2-j} + q^{2n^2+2n}\sum_{j=-n}^{n-1}q^{-4j^2-3j}\right), 
\end{equation}

\begin{equation} \label{aoddslater2q}
\alpha_{2n+1} = -\frac{1}{1-q}\left(q^{2n^2+2n}\sum_{j=-n-1}^{n}q^{-4j^2-3j} + q^{2n^2+4n+2}\sum_{j=-n}^{n}q^{-4j^2-j}\right),
\end{equation}

\noindent and

\begin{equation} \label{bslater2q}
\beta_n = \frac{(-1)^nq^{-\binom{n+1}{2}}}{(q)_{n}(1-q^{2n+1})}.
\end{equation}
\end{corollary}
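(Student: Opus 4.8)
The plan is to prove Corollary \ref{paircor2} by substituting the displayed Slater pair directly into Theorems \ref{main1} and \ref{main3}, so that the entire proof reduces to an algebraic simplification of the resulting $\alpha$-sequences. First I would check the hypotheses: the pair
$$\alpha_n = \begin{cases} 1, & n = 0, \\ (-1)^n q^{-\binom{n+1}{2}}(1+q^n), & n \geq 1, \end{cases} \qquad \beta_n = \frac{(-1)^n q^{-\binom{n+1}{2}}}{(q)_n}$$
is a Bailey pair relative to $1$ by \cite[p.~468]{Sl1} (or by a one-line verification from \eqref{pairdefbis}), and $\alpha_0 = \beta_0 = 1$ since $\binom{1}{2} = 0$. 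The $\beta$-parts then follow with no work: \eqref{b1} gives $b_n = -\beta_{n-1}/(1-q^{2n-1})$, which is \eqref{bslater2}, and the $\beta''$-formula of Theorem \ref{main3} gives $\beta_n = \beta_n/(1-q^{2n+1})$, which is \eqref{bslater2q}.

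For the $\alpha$-parts I would use $\binom{2j+1}{2} = 2j^2 + j$ and $\binom{2j+2}{2} = 2j^2 + 3j + 1$ to write, for $j \geq 1$,
$$\alpha_{2j} = q^{-2j^2 - j} + q^{-2j^2 + j}, \qquad \alpha_{2j+1} = -q^{-2j^2 - 3j - 1} - q^{-2j^2 - j} \quad (j \geq 0),$$
remembering that the generic formula fails at $n = 0$, where $\alpha_0 = 1$ rather than $2$. Substituting into \eqref{aeven}--\eqref{aodd}, multiplying through by the weights $q^{-2j^2 - 2j}$ and $q^{-2j^2}$, and collecting $q$-powers, each inner sum $\sum_{j=0}^{m}(\cdots)$ breaks into two one-sided sums. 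The key observation is that the substitutions $j \mapsto -j-1$ and $j \mapsto -j$ satisfy $-4(-j-1)^2 - 3(-j-1) = -4j^2 - 5j - 1$ and $-4(-j)^2 - (-j) = -4j^2 + j$, so the ``second'' piece is exactly the negative-index tail needed to fold $\sum_{j=0}^{n-1} q^{-4j^2 - 3j}$ into $\sum_{j=-n}^{n-1} q^{-4j^2 - 3j}$ and to fold $1 + \sum_{j=1}^{n} q^{-4j^2 - j}$ into $\sum_{j=-n}^{n} q^{-4j^2 - j}$; in the latter the exceptional term $\alpha_0 = 1$ supplies precisely the $j = 0$ summand. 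This produces \eqref{aevenslater2} and \eqref{aoddslater2}.

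Finally, the same inner sums reappear in Theorem \ref{main3}: $\sum_{j=0}^{n} q^{-2j^2}\alpha_{2j} = \sum_{j=-n}^{n} q^{-4j^2 - j}$ and $\sum_{j=0}^{n-1} q^{-2j^2 - 2j}\alpha_{2j+1} = -\sum_{j=-n}^{n-1} q^{-4j^2 - 3j}$, while the $j \mapsto -j-1$ trick applied to $0 \leq j \leq n$ gives $\sum_{j=0}^{n} q^{-2j^2 - 2j}\alpha_{2j+1} = -\sum_{j=-n-1}^{n} q^{-4j^2 - 3j}$. Feeding these into the formulas for $\alpha''_{2n}$ and $\alpha''_{2n+1}$ yields \eqref{aevenslater2q} and \eqref{aoddslater2q} immediately. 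I expect the main obstacle to be purely bookkeeping: keeping the index ranges straight when folding the one-sided sums into the symmetric-type ranges $[-n, n-1]$, $[-n, n]$, and $[-n-1, n]$, and correctly accounting for the lone exceptional value $\alpha_0 = 1$, which is the only point where the closed form for $\alpha_n$ does not extend.
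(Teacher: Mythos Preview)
Your proposal is correct and follows exactly the paper's approach: the paper simply states that the corollary follows by applying Theorems \ref{main1} and \ref{main3} to the given Slater pair, and you have supplied the routine index-folding details that the paper omits. Your handling of the exceptional value $\alpha_0 = 1$ and the substitutions $j \mapsto -j$, $j \mapsto -j-1$ is accurate and yields \eqref{aevenslater2}--\eqref{bslater2q} as claimed.
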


Finally, on p. 468 of \cite{Sl1} we find the Bailey pair relative to $1$,
\begin{equation*}
\alpha_n = 
\begin{cases}
1, &\text{if $n=0$}, \\
(-1)^nq^{-n(n+3)/2}(1+q^{2n}), &\text{otherwise},
\end{cases}
\end{equation*}
and
\begin{equation*}
\beta_n = \frac{(-1)^nq^{-n(n+3)/2}}{(q)_n}.
\end{equation*}

Applying Theorems \ref{main1} and \ref{main3} we find the following.   

\begin{corollary} \label{paircor3}
The sequences $(a_n,b_n)$ form a Bailey pair relative to $1$, where

\begin{equation} \label{aevenslater3}
a_{2n} = (1-q^{4n})q^{2n^2-2n+1}\sum_{j=-n}^{n-1}q^{-4j^2-j}, 
\end{equation}

\begin{equation} \label{aoddslater3}
a_{2n+1} = -(1-q^{4n+2})q^{2n^2}\sum_{j=-n}^{n}q^{-4j^2-3j},
\end{equation}

\noindent and

\begin{equation} \label{bslater3}
b_n = 
\begin{cases}
0, &\text{if $n=0$},\\
\frac{(-1)^nq^{-\binom{n+1}{2}+1}}{(q)_{n-1}(1-q^{2n-1})}, &\text{otherwise},
\end{cases}
\end{equation}

\noindent and the sequences $(\alpha_n,\beta_n)$ form a Bailey pair relative to $q$, where

\begin{equation} \label{aevenslater3q} 
\alpha_{2n} = \frac{1}{1-q}\left(q^{2n^2}\sum_{j=-n}^{n}q^{-4j^2-3j} + q^{2n^2+2n+1}\sum_{j=-n}^{n-1}q^{-4j^2-j}\right), 
\end{equation}

\begin{equation} \label{aoddslater3q}
\alpha_{2n+1} = -\frac{1}{1-q}\left(q^{2n^2+2n+1}\sum_{j=-n-1}^{n}q^{-4j^2-j} + q^{2n^2+4n+2}\sum_{j=-n}^{n}q^{-4j^2-3j}\right),
\end{equation}

\noindent and

\begin{equation} \label{bslater3q}
\beta_n = \frac{(-1)^nq^{-n(n+3)/2}}{(q)_{n}(1-q^{2n+1})}.
\end{equation}
\end{corollary}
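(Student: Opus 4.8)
The plan is to apply Theorems \ref{main1} and \ref{main3} directly to the Bailey pair relative to $1$ stated just before the corollary, namely $\alpha_0 = 1$, $\alpha_n = (-1)^n q^{-n(n+3)/2}(1+q^{2n})$ for $n \geq 1$, and $\beta_n = (-1)^n q^{-n(n+3)/2}/(q)_n$. Since $\alpha_0 = \beta_0 = 1$, both theorems apply. The point is that all the assertions of the corollary are obtained by purely mechanical substitution; the only real work is bookkeeping with the quadratic exponents so that the inner sums come out in the stated closed form.

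First I would verify the $\beta$-statements, which are immediate. For the pair relative to $1$, formula \eqref{b1} of Theorem \ref{main1} gives $b_n = -\beta_{n-1}/(1-q^{2n-1}) = -(-1)^{n-1}q^{-(n-1)(n+2)/2}/\big((q)_{n-1}(1-q^{2n-1})\big)$ for $n \geq 1$, and $b_0 = 0$; a short check that $-(n-1)(n+2)/2 = -\binom{n+1}{2}+1$ then yields \eqref{bslater3}. For the pair relative to $q$, formula from Theorem \ref{main3} gives $\beta_n'' = \beta_n/(1-q^{2n+1}) = (-1)^n q^{-n(n+3)/2}/\big((q)_n(1-q^{2n+1})\big)$, which is exactly \eqref{bslater3q}.

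Next I would handle the $\alpha$-statements for Theorem \ref{main1}. Substituting the given $\alpha_{2j+1} = -q^{-(2j+1)(2j+4)/2}(1+q^{4j+2})$ into \eqref{aeven}, the factor $q^{2n^2-2n}\cdot q^{-2j^2-2j}$ multiplied by $-q^{-(2j+1)(j+2)}(1+q^{4j+2})$ must be simplified; collecting the exponent $2n^2-2n-2j^2-2j-(2j+1)(j+2)$ and using the $\pm$ from $(1+q^{4j+2})$, the two resulting sums should combine (after reindexing one of them $j \mapsto -j-1$ or $j \mapsto -j$) into the single sum $\sum_{j=-n}^{n-1} q^{-4j^2-j}$ with the claimed prefactor $q^{2n^2-2n+1}$, giving \eqref{aevenslater3}. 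The same procedure with $\alpha_{2j} = q^{-j(2j+3)}(1+q^{4j})$ (and $\alpha_0=1$ handled as the $j=0$ term, which is consistent since $q^{-0}(1+q^0)=2$ versus the needed $1$ — here one must be slightly careful, but the Bailey pair's own convention already incorporates $\alpha_0=1$, and substituting this value in \eqref{aodd} directly gives the correct contribution) leads to \eqref{aoddslater3} after a reindexing that fuses the two sums into $\sum_{j=-n}^n q^{-4j^2-3j}$. The computations for \eqref{aevenslater3q} and \eqref{aoddslater3q} are the analogous unwinding of the $\alpha''$ formulas in Theorem \ref{main3}, where now the two inner sums do \emph{not} collapse into one but remain as the displayed pair, each carrying the appropriate power of $q$; symmetry of $j \mapsto -j$ in $q^{-4j^2-j}$ versus $q^{-4j^2-3j}$ (which are interchanged by $j \mapsto -j$) is the identity that makes the index ranges in \eqref{aevenslater3q}--\eqref{aoddslater3q} come out as stated.

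The main obstacle — really the only one — is getting every quadratic exponent and every summation range exactly right, in particular correctly matching the halves $\alpha_{2j}$ and $\alpha_{2j+1}$ of the input pair to the even/odd cases of \eqref{aeven}--\eqref{aodd}, and performing the reindexings $j \mapsto -j$, $j \mapsto -j-1$ that merge or realign the two inner sums. Because Theorem \ref{main3} is literally Theorem \ref{main1} followed by Theorem \ref{main2}, once the Theorem \ref{main1} computation is done the Theorem \ref{main3} part requires only feeding the intermediate $(\alpha_n',\beta_n')$ through Theorem \ref{main2} and simplifying once more. No convergence or analytic issues arise; this is a finite, elementary identity verification.
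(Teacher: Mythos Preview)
Your proposal is correct and is exactly the paper's approach: the paper simply states that the corollary follows by applying Theorems~\ref{main1} and~\ref{main3} to the Slater Bailey pair displayed immediately before it, leaving the exponent bookkeeping and reindexing implicit. Your outline of that bookkeeping (the $\beta$-checks, the substitution of $\alpha_{2j}$ and $\alpha_{2j+1}$, and the $j\mapsto -j$, $j\mapsto -j-1$ reindexings that merge the half-sums) is precisely what is required.
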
 

\section{The seventh order Bailey pairs of Andrews} \label{Andrews'pairs}
We begin with a change of base lemma for Bailey pairs.  For other results of this nature, see \cite{Be-Wa1} and \cite{Br-Is-St1}.
Throughout this section we emphasize the base by saying that a pair of sequences satisfying \eqref{pairdef} is a Bailey pair relative to $(a,q)$.

\begin{lemma} \label{basechange}
If $(\alpha_n,\beta_n)$ is a Bailey pair relative to $(1,q)$, then $(\alpha_n',\beta_n')$ is a Bailey pair relative to $(1,q^2)$, where 
\begin{equation} \label{basechangealpha}
\alpha_n' = \frac{1}{2}(1+q^{2n})q^{n^2-n}\alpha_n
\end{equation} 
and
\begin{equation} \label{basechangebeta}
\beta_n' = \frac{1}{(-1)_{2n}}\sum_{k=0}^n\frac{q^{k^2-k}}{(q^2;q^2)_{n-k}}\beta_k.
\end{equation}
\end{lemma}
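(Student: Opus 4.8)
The plan is to verify the defining relation \eqref{pairdef} for $(\alpha_n',\beta_n')$ relative to $(1,q^2)$ directly, starting from the assumption that $(\alpha_n,\beta_n)$ satisfies \eqref{pairdef} relative to $(1,q)$. Concretely, I would substitute the proposed formula \eqref{basechangealpha} for $\alpha_n'$ into
\[
\sum_{k=0}^n \frac{\alpha_k'}{(q^2;q^2)_{n-k}(q^2;q^2)_{n+k}}
\]
and try to show this equals $\beta_n'$ as given by \eqref{basechangebeta}. Writing $\alpha_k' = \tfrac12(1+q^{2k})q^{k^2-k}\alpha_k = \tfrac12\big(q^{k^2-k}+q^{k^2+k}\big)\alpha_k$ and using $(1+q^{2k}) = (1-q^{4k})/(1-q^{2k})$ when $k\neq0$ (handling $k=0$ separately), the factor $\tfrac12(1+q^{2k})/(q^2;q^2)_{n+k}$ should combine with $1/(q^2;q^2)_{n-k}$ into something expressible via base-$q$ Pochhammer symbols. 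The key algebraic identity I expect to need is a relation of the form $(q^2;q^2)_{n-k}(q^2;q^2)_{n+k} = (q)_{2n-2k}(q)_{2n+2k}/\big(\text{something involving }(-q)\big)$, or more usefully a direct manipulation using $(q;q)_{2m} = (q;q^2)_m (q^2;q^2)_m$ and $(-q;q)_{2m} = (-q;q^2)_m(-q^2;q^2)_m$, together with $(-1)_{2n} = 2(-q)_{2n-1}$ type identities.

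The cleanest route is probably the following reduction. I would first rewrite the target $\beta_n'$ in \eqref{basechangebeta} using \eqref{pairdef} for the base-$q$ pair: since $\beta_k = \sum_{i=0}^k \alpha_i/\big((q)_{k-i}(q)_{k+i}\big)$, substituting gives $\beta_n'$ as a double sum over $i\le k\le n$ of $\alpha_i$ times an explicit $q$-factorial kernel. On the other hand, the left-hand side $\sum_{k}\alpha_k'/\big((q^2;q^2)_{n-k}(q^2;q^2)_{n+k}\big)$ is a single sum in $\alpha_k$ with an explicit kernel. Equating coefficients of each $\alpha_i$, the claim reduces to a purely $q$-factorial identity: for each $i$ with $0\le i\le n$,
\[
\frac{\tfrac12(1+q^{2i})q^{i^2-i}}{(q^2;q^2)_{n-i}(q^2;q^2)_{n+i}}
= \frac{1}{(-1)_{2n}}\sum_{k=i}^n \frac{q^{k^2-k}}{(q^2;q^2)_{n-k}(q)_{k-i}(q)_{k+i}}.
\]
This is a terminating ${}_2\phi_1$-type sum (after clearing the $i$-independent pieces it should be recognizable as a $q$-Chu--Vandermonde or $q$-Gauss evaluation), and I would prove it by that route, or failing a clean hypergeometric identification, by induction on $n-i$. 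A sanity check at $i=n$ (only $k=n$ survives) gives $\tfrac12(1+q^{2n})q^{n^2-n}/(q^2;q^2)_{2n} = q^{n^2-n}/\big((-1)_{2n}(q)_{2n}\big)$, which holds since $(-1)_{2n}(q)_{2n} = (q^2;q^4)_n \cdot 2 \cdot (q;q^2)_n (q^2;q^2)_n \cdots$ — more simply, $(-1;q)_{2n}(q;q)_{2n} = (q^2;q^2)_{2n}\cdot\tfrac{(-1;q)_{2n}}{(-q;q)_{2n}}\cdots$; in any case $(1+q^{2n})(q^2;q^2)_{2n}^{-1}\cdot\tfrac12 (-1)_{2n}(q)_{2n} \overset{?}{=} 1$ reduces to the standard $(-1;q)_{2n}(q;q)_{2n} = 2(q^2;q^2)_{2n}(-q^2;q^2)_{2n}/(-q^2;q^2)_n\cdots$ — this bookkeeping I would pin down carefully at the start, since getting the normalization right is essential.

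The main obstacle I anticipate is precisely this inner $q$-factorial identity and its normalization: arranging the $q$-Pochhammer symbols so that the sum over $k$ collapses, and correctly tracking the interplay between base-$q$ symbols $(q)_m$, base-$q^2$ symbols $(q^2;q^2)_m$, and the $(-1)_{2n} = (-1;q)_{2n}$ factor (which is where the "change of base" content really lives — it encodes splitting $(q)_{2m}$ into even and odd parts). Once that identity is in hand, the rest is formal. An alternative, possibly slicker, approach would be to invoke a known change-of-base result in the Bailey-pair literature (as the authors note, see \cite{Be-Wa1}, \cite{Br-Is-St1}) and check that \eqref{basechangealpha}–\eqref{basechangebeta} is the specialization $a=1$ of such a formula; I would use the direct verification as the primary argument and mention the connection to the literature as a remark.
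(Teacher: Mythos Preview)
Your approach is essentially the same as the paper's: both start from $\beta_n'$, substitute the base-$q$ Bailey relation for $\beta_k$, interchange sums, and reduce to exactly the inner summation identity you isolate. The paper evaluates that inner sum not via $q$-Chu--Vandermonde or $q$-Gauss but via the second Heine transformation (specialized at $a=q^{-(n-r)}$, $b=-q^{-(n-r)}$, $c=q^{2r+1}$, $z=-q^{2n}$), which collapses it to the closed form $\tfrac12(1+q^{2r})(q)_{2r}(-1)_{2n}/(q^2;q^2)_{n+r}$; your $i=n$ sanity check is correct once you use $(-1;q)_{2n}(q;q)_{2n}=2(q^2;q^2)_{2n}/(1+q^{2n})$.
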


\begin{proof}
We will need the fact that
\begin{equation} \label{littlefact1}
(q^{-n})_k = \frac{(q)_n}{(q)_{n-k}}(-1)^kq^{\binom{k}{2} - nk}
\end{equation}
along with the identity
\begin{equation} \label{littlefact2}
\sum_{k=0}^{n-r}\frac{(-1)^kq^{2nk}(q^{-(n-r)})_k(-q^{-(n-r)})_k}{(q)_k(q^{2r+1})_k} = \frac{(1+q^{2r})(q)_{2r}(-1)_{2n}}{2(q^2;q^2)_{n+r}},
\end{equation}
which follows from a short calculation using the case $z= -q^{2n}$, $a=q^{-n+r}$, $b = -q^{-n+r}$, and $c = q^{2r+1}$ of the second Heine transformation \cite{Ga-Ra1},
\begin{equation}
\sum_{n \geq 0} \frac{(a)_n(b)_n}{(c)_n(q)_n}z^n = \frac{(c/b)_{\infty}(bz)_{\infty}}{(c)_{\infty}(z)_{\infty}}\sum_{n \geq 0} \frac{(\frac{abz}{c})_n(b)_n}{(bz)_n(q)_n} \left(\frac{c}{b}\right)^n.
\end{equation}

Now, beginning with \eqref{basechangebeta}, we have
\begin{align*}
\beta_n' &= \frac{1}{(-1)_{2n}}\sum_{k=0}^n\frac{q^{k^2-k}}{(q^2;q^2)_{n-k}}\beta_k \\
&= \frac{1}{(-1)_{2n}}\sum_{k=0}^n\frac{q^{k^2-k}}{(q^2;q^2)_{n-k}}\sum_{r=0}^k \frac{1}{(q)_{k-r}(q)_{k+r}}\alpha_r \\
&= \frac{1}{(-1)_{2n}}\sum_{r=0}^{n}\alpha_r \sum_{k=r}^{n} \frac{q^{k^2-k}}{(q^2;q^2)_{n-k}}\frac{1}{(q)_{k-r}(q)_{k+r}} \\
&= \frac{1}{(-1)_{2n}}\sum_{r=0}^{n}\alpha_r \sum_{k=0}^{n-r} \frac{q^{k^2+2kr+r^2-k-r}}{(q^2;q^2)_{n-k-r}(q)_{k}(q)_{k+2r}} \\
&= \frac{1}{(-1)_{2n}}\sum_{r=0}^{n}\frac{q^{r^2-r}}{(q)_{2r}}\alpha_r \sum_{k=0}^{n-r} \frac{q^{k^2+2kr-k}}{(q^2;q^2)_{n-k-r}(q)_{k}(q^{2r+1})_{k}} \\
&= \frac{1}{(-1)_{2n}}\sum_{r=0}^{n}\frac{q^{r^2-r}}{(q)_{2r}(q^2;q^2)_{n-r}}\alpha_r \sum_{k=0}^{n-r} \frac{(-1)^kq^{2nk}(q^{-2(n-r)};q^2)_k}{(q)_{k}(q^{2r+1})_{k}} \hskip.4in \text{(by \eqref{littlefact1})} \\
&= \frac{1}{(-1)_{2n}}\sum_{r=0}^{n}\frac{q^{r^2-r}}{(q)_{2r}(q^2;q^2)_{n-r}}\alpha_r \sum_{k=0}^{n-r} \frac{(-1)^kq^{2nk}(q^{-(n-r)})_k(-q^{-(n-r)})_k}{(q)_{k}(q^{2r+1})_{k}} \\
&= \sum_{r=0}^n \frac{(1+q^{2r}) q^{r^2-r}}{2(q^2;q^2)_{n-r}(q^2;q^2)_{n+r}}\alpha_r \hskip.4in \text{(by \eqref{littlefact2})}.
\end{align*}
This implies the statement of the theorem.
\end{proof}

Now we insert the Bailey pair in \eqref{a2n}--\eqref{bn} into Lemma \ref{basechange}.   We obtain a Bailey pair relative to $(1,q^2)$, where
\begin{equation*}
a_{2n}' = \frac{1}{2}(1-q^{8n})q^{6n^2-4n}\sum_{j=-n}^{n-1}q^{-2j^2-2j},
\end{equation*}
\begin{equation*}
a_{2n+1}' = -\frac{1}{2}(1-q^{8n+4})q^{6n^2+2n}\sum_{j=-n}^n q^{-2j^2},
\end{equation*}
$b_0' = 0$, and for $n \geq 1$,
\begin{align*}
b_n' &= \frac{1}{(-1)_{2n}}\sum_{k=1}^n\frac{(-1)^kq^{k^2-k}(q;q^2)_{k-1}}{(q)_{2k-1}(q^2;q^2)_{n-k}} \\
&= \frac{-1}{(-1)_{2n}}\sum_{k=0}^{n-1}\frac{(-1)^kq^{k^2+k}(q;q^2)_{k}}{(q)_{2k+1}(q^2;q^2)_{n-k-1}} \\
&= \frac{-1}{(-1)_{2n}(q^2;q^2)_{n-1}(1-q)}\sum_{k=0}^{n-1}\frac{q^{2nk}(q^{-2(n-1)};q^2)_k (q;q^2)_{k}}{(q^2;q^2)_k(q^3;q^2)_k} \hskip.4in \text{(by \eqref{littlefact1})} \\
&= \frac{-1}{(-1)_{2n}(q;q^2)_n} \\
&= \frac{-1}{2(q^{2n};q^2)_n},
\end{align*}
where the penultimate equality follows from the case $q=q^2$, $n= n-1$, $a=q$, and $c=q^3$ of the $q$-Chu-Vandermonde summation \cite{Ga-Ra1}
\begin{equation*}
\sum_{k= 0}^n \frac{(a)_k(q^{-n})_k}{(q)_k(c)_k}\left(\frac{cq^n}{a}\right)^k = \frac{(c/a)_n}{(c)_n}. 
\end{equation*}
Now multiplying $(a_n',b_n')$ by $-2$ and replacing $q$ by $q^{1/2}$ gives Andrews' Bailey pair \eqref{mathcalA2n(1)}--\eqref{mathcalBn(1)}. Finally, multiplying \eqref{mathcalA2n(1)}--\eqref{mathcalBn(1)} by $-1$, then applying Theorem \ref{main2} and Theorem \ref{main1inverse} gives the pairs \eqref{mathcalA2n(2)}--\eqref{mathcalBn(2)} and \eqref{mathcalA2n(0)}--\eqref{mathcalBn(0)}, respectively.

\section{Proofs of Theorems \ref{mockthm1}--\ref{mockthm3} and Corollary \ref{mockid}}

The approach for proving Theorems \ref{mockthm1}--\ref{mockthm3} is as follows.  We first apply (\ref{alphaprimedef}) and (\ref{betaprimedef}) to Corollaries \ref{paircor1}--\ref{paircor3} to obtain new Bailey pairs, then use (\ref{limitBailey}) in various ways to obtain identities expressing $q$-hypergeometric double sums in terms of the indefinite theta series (\ref{fdef}). Next, to deduce that these $q$-hypergeometric double sums are mock theta functions, we apply the following three explicit results of Hickerson and Mortenson which express (\ref{fdef}) in terms of the Appell-Lerch series (\ref{Appell-Lerch}). Define

\begin{equation} \label{g}
\begin{aligned}
g_{a,b,c}(x, y, q, z_1, z_0) & := \sum_{t=0}^{a-1} (-y)^t q^{c\binom{t}{2}} j(q^{bt} x, q^a) m\left(-q^{a \binom{b+1}{2} - c \binom{a+1}{2} - t(b^2 - ac)} \frac{(-y)^a}{(-x)^b}, q^{a(b^2 - ac)}, z_0 \right)\\
& + \sum_{t=0}^{c-1} (-x)^t q^{a \binom{t}{2}} j(q^{bt} y, q^c) m\left(-q^{c\binom{b+1}{2} - a\binom{c+1}{2} - t(b^2 -ac)}  \frac{(-x)^c}{(-y)^b}, q^{c(b^2 - ac)}, z_1\right).
\end{aligned}
\end{equation}

Following \cite{Hi-Mo1}, we use the term ``generic" to mean that the parameters do not cause poles in the Appell-Lerch sums or in the quotients of theta functions.

\begin{theorem}{\cite[Theorem 1.6]{Hi-Mo1}} \label{hm1} Let $n$ be a positive integer. For generic $x$, $y \in \mathbb{C}^{*}$

\begin{equation*}
f_{n, n+1, n}(x, y, q) = g_{n, n+1, n}(x, y, q, y^n / x^n, x^n / y^n).
\end{equation*}
\end{theorem}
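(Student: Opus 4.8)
My plan is to prove this the way such Hecke-type identities are usually established: reindex the indefinite double sum $f_{n,n+1,n}(x,y,q)$ so that its non-convergent directions are explicit, peel off the ordinary theta pieces, resum what remains into the shape of $m(x,q,z)$, and then check that no theta-function correction survives for the particular auxiliary parameters $z_1 = y^n/x^n$, $z_0 = x^n/y^n$.

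First I would complete the square in the exponent: $n\binom r2 + (n+1)rs + n\binom s2 = n\binom{r+s}{2} + rs$, so that in the quadrant $r,s\ge 0$ the slow (non-convergent) direction is $r=-s$, along which the exponent behaves like $-s^2$, and in $r,s<0$ the mirror direction. Grouping the terms of $f_{n,n+1,n}$ over a fundamental domain for the automorphism of the quadratic form (so as to make the sum absolutely convergent), and then splitting the index of the fast variable into residue classes $t$ modulo $n$ — once from the $\sum_{r,s\ge 0}$ block and once, with $x$ and $y$ interchanged, from the $\sum_{r,s<0}$ block — I would write $f_{n,n+1,n}(x,y,q)$ as $\sum_{t=0}^{n-1}$ of two families of sums, each a product of a \emph{finite} theta-type sum in one variable and a \emph{bilateral} sum in a free variable.

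Next, in each such piece the finite sum is evaluated by the Jacobi triple product; this produces the factors $(-y)^t q^{n\binom t2} j(q^{(n+1)t}x,q^{n})$ and $(-x)^t q^{n\binom t2} j(q^{(n+1)t}y,q^{n})$ of $g_{n,n+1,n}$. The remaining bilateral sum, after collecting a geometric progression, is resummed into a single factor $1/(1-q^{\bullet}xz)$; matching against the series \eqref{Appell-Lerch} and tracking the exponents (the shift is $n\binom{n+2}{2}-n\binom{n+1}{2}=n(n+1)$, the modulus $q^{n(2n+1)}$) identifies it with $m\bigl(-q^{\,n(n+1)-t(2n+1)}(-y)^n/(-x)^{n+1},\,q^{n(2n+1)},\,z_0\bigr)$ and its companion with $x,y$ swapped and $z_0$ replaced by $z_1$. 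At this stage one has $f_{n,n+1,n}=g_{n,n+1,n}(x,y,q,z_1,z_0)$ \emph{plus} a boundary term left over from the rearrangement: a finite sum of products of theta functions, carrying the $z_0,z_1$-dependence that must cancel the spurious $z$-dependence of $g$.

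The crux, and the step I expect to be the main obstacle, is showing this boundary theta term vanishes identically when $z_1=y^n/x^n$ and $z_0=x^n/y^n$. The hypothesis $b=a+1$ is essential here: for $f_{n,n+1,n}$ the boundary term collapses to a single theta quotient, and the $x\leftrightarrow y$ symmetry of the $u=r+s$ form forces it to zero precisely at these values of $(z_0,z_1)$; for $f_{n,n+2,n}$, by contrast, the analogous term does not vanish, which is why the companion theorems pick up extra modular forms. A tidy way to organize this last point is to check that $f_{n,n+1,n}(x,y,q)$ and $g_{n,n+1,n}(x,y,q,y^n/x^n,x^n/y^n)$ satisfy the same quasi-periodicity under $x\mapsto q^{n(2n+1)}x$ and under $y\mapsto q^{n(2n+1)}y$, and that the Appell--Lerch poles of the latter cancel in pairs at these $z$-values; then $f-g$ is an entire elliptic function of $x$, hence constant in $x$ (and likewise in $y$), and evaluation at one convenient specialization shows the constant is $0$. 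Throughout, the one subtlety to respect is convergence: the series for $f_{n,n+1,n}$ converges only conditionally along the indefinite directions, so every rearrangement in the first two steps must be carried out on the fundamental-domain grouping rather than on the raw double series.
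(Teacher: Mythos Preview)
The paper does not prove this statement. Theorem~\ref{hm1} is quoted verbatim from Hickerson and Mortenson \cite[Theorem~1.6]{Hi-Mo1} and is used in the paper as a black box to convert the indefinite theta series $f_{n,n+1,n}$ arising from the Bailey-pair computations into Appell--Lerch sums. So there is nothing in the paper's own argument to compare your proposal against.

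As to the proposal itself: the outline is broadly in the right spirit, and your arithmetic (the modulus $q^{n(2n+1)}$, the shift $n(n+1)$, the completion of the square $n\binom r2+(n+1)rs+n\binom s2=n\binom{r+s}{2}+rs$) is correct. One remark: your worry about conditional convergence along $r=-s$ is misplaced, since that direction lies outside both quadrants in the definition of $f_{a,b,c}$, and on the first and third quadrants the exponent $n\binom{r+s}{2}+rs$ is nonnegative; the double sum converges absolutely there. The genuine delicacy is not convergence of $f$ but the rearrangement into bilateral sums that become the $m$-functions. Also, the actual proof in \cite{Hi-Mo1} is organized somewhat differently from your sketch: rather than a direct quadrant reindexing, Hickerson and Mortenson establish a master identity expressing $f_{a,b,c}$ in terms of $g_{a,b,c}$ plus an explicit theta correction (their Theorem~1.4), and then prove case by case that for $b=a+1$ and the special choice $z_1=y^n/x^n$, $z_0=x^n/y^n$ the correction term vanishes. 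Your final paragraph, checking quasi-periodicity and pole cancellation to force $f-g$ to be an entire elliptic function, is closer to the mechanism they actually use for that vanishing step.
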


\begin{theorem}{\cite[Theorem 1.9]{Hi-Mo1}} \label{hm2} Let $n$ be an odd positive integer. For generic $x$, $y \in \mathbb{C}^{*}$

\begin{equation*}
f_{n, n+2, n}(x, y, q) = g_{n, n+2, n}(x, y, q, y^n / x^n, x^n / y^n) - \Theta_{n,2}(x,y,q)
\end{equation*}

\noindent where

\begin{equation*}
\Theta_{n,2}(x,y,q) := \frac{y^{(n+1)/2} J_{2n, 4n} J_{4(n+1), 8(n+1)} j(y/x, q^{4(n+1)}) j(q^{n+2} xy, q^{4(n+1)}) j(q^{2n} / x^2 y^2, q^{8(n+1)})}{q^{(n^2-3)/2} x^{(n-3)/2} j(y^n / x^n, q^{4n(n+1)}) j(-q^{n+2} x^2, q^{4(n+1)}) j(-q^{n+2} y^2, q^{4(n+1)})}.
\end{equation*}

\end{theorem}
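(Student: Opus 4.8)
The final statement to prove is Theorem \ref{hm2}, which is quoted directly from Hickerson--Mortenson \cite[Theorem 1.9]{Hi-Mo1}. Since this is an external result being recalled rather than a new theorem, a full self-contained proof is not expected here; the plan is to explain how the identity is established (following \cite{Hi-Mo1}) and then to defer the routine verification to that reference.

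First I would recall the general strategy used by Hickerson and Mortenson for converting a Hecke-type double sum $f_{a,b,c}(x,y,q)$ with $b^2 > ac$ into Appell--Lerch series. The key is to split the summation indices $r,s$ in \eqref{fdef} according to residue classes. For $f_{n,n+2,n}$ one writes $r = (n+2)u + i$ and $s = -nu + \text{(shift)}$ type substitutions — more precisely one diagonalizes the quadratic form $n\binom{r}{2} + (n+2)rs + n\binom{s}{2}$ by introducing new variables along the ``long" direction of the indefinite form and the ``short" orthogonal direction. Summing over the long direction produces a geometric-type series that collapses into an Appell--Lerch sum $m(\cdot,\cdot,\cdot)$, with the $t$-sums in the definition \eqref{g} of $g_{a,b,c}$ recording the finitely many residue classes, and the theta quotients $j(q^{bt}x,q^a)$, $j(q^{bt}y,q^c)$ arising from the short direction. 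The reason $n$ must be odd (and the reason the correction term $\Theta_{n,2}$ appears, unlike in Theorem \ref{hm1}) is a parity obstruction: when $b = n+2$ and $a=c=n$, the lattice of summation indices relative to the natural sublattice used in the decomposition has an index-$2$ discrepancy precisely when $n$ is odd, so one picks up an extra ``boundary" theta contribution that does not fit into any $m$-function; this is exactly $\Theta_{n,2}(x,y,q)$.

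The concrete steps, following \cite{Hi-Mo1}, are: (1) apply the defining change of variables to \eqref{fdef} for $(a,b,c) = (n,n+2,n)$ and carefully track the range of summation, isolating the terms that pair up into complete Appell--Lerch sums versus the leftover ones; (2) invoke the Appell--Lerch recognition: a sum of the form $\sum_{\ell} (-1)^\ell q^{\binom{\ell}{2}} z^\ell / (1 - q^{\ell-1}xz)$ equals $j(z,q)\, m(x,q,z)$ by \eqref{Appell-Lerch}, and match parameters so that the argument of each $m$ is $-q^{c\binom{b+1}{2} - a\binom{c+1}{2} - t(b^2-ac)}(-x)^c/(-y)^b$ etc., with $z_1 = x^n/y^n$, $z_0 = y^n/x^n$ as dictated by the normalization in the statement; (3) collect the short-direction sums into the theta factors $j(q^{bt}x,q^a)$ and $j(q^{bt}y,q^c)$, giving the two $t$-sums of $g_{n,n+2,n}$; and (4) identify the residual boundary term, simplify the resulting product of theta functions using standard $j$-splitting identities (e.g. $j(x,q) = j(-qx^2, q^4) \pm \cdots$ type dissections), and check it equals $\Theta_{n,2}(x,y,q)$ as displayed.

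I expect the main obstacle to be step (4): pinning down the residual term and massaging the product of $j$-functions into the exact normalized form $\Theta_{n,2}(x,y,q)$. This requires delicate bookkeeping of exponents in the quadratic form, repeated use of theta-function dissection and quasi-periodicity identities $j(q^m x, q) = (-1)^m q^{-\binom{m}{2}} x^{-m} j(x,q)$, and care with the genericity hypotheses to ensure no spurious poles or zeros are introduced. Since these computations are carried out in full in \cite{Hi-Mo1}, here it suffices to cite that source; the identity is used in the present paper purely as a black box to convert the indefinite theta representations of $\mathcal{M}_{i}(q)$ into Appell--Lerch form, which is what certifies them as mock theta functions.
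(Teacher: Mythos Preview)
Your assessment is correct: the paper does not prove this statement at all but simply quotes it verbatim from \cite[Theorem~1.9]{Hi-Mo1} and uses it as a black box in the proofs of Theorems~\ref{mockthm1}--\ref{mockthm3}. Your sketch of the Hickerson--Mortenson argument is a reasonable outline of how the identity is actually established in that reference, but for the purposes of the present paper no proof is required beyond the citation.
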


\begin{theorem}{\cite[Theorem 1.11]{Hi-Mo1}} \label{hm3} Let $n$ be an odd positive integer. For generic $x$, $y \in \mathbb{C}^{*}$

\begin{equation*}
f_{n, n+4, n}(x, y, q) = g_{n, n+4, n}(x, y, q, y^n / x^n, x^n / y^n) - \Theta_{n,4}(x,y,q)
\end{equation*}

\noindent where

\begin{equation*}
\Theta_{n,4}(x,y,q) := \frac{q^{-(n^2 + n -3)} x^{-(n-3)/2} y^{(n+1)/2} j(y/x, q^{4(2n+4)})}{j(y^n / x^n, q^{4n(2n+4)}) j(-q^{2n+8} x^4, q^{4(2n+4)}) j(-q^{2n+8}, q^{4(2n+4)})} \Bigl\{ J_{4n,16n} S_1 - qJ_{8n,16n} S_2 \Bigr \},
\end{equation*}

\begin{equation*}
\begin{aligned}
S_1 &:= \frac{j(q^{6n+16} x^2 y^2, q^{4(2n+4)}) j(-q^{2(2n+4)} y/x, q^{4(2n+4)}) j(q^{n+4} xy, q^{2(2n+4)})}{J_{2(2n+4}^3 J_{8(2n+4)}} \\
& \cdot \Biggl \{ j(-q^{2n+8} x^2 y^2, q^{4(2n+4)}) j(q^{2(2n+4)}, q^{4(2n+4)}) J_{4(2n+4)}^2 \\
& + \frac{q^{n+4} x^2 j(-q^{6n+16} x^2 y^2, q^{4(2n+4)}) j(q^{2(2n+4)} y/x, q^{4(2n+4)})^2 j(-y/x, q^{4(2n+4)})^2}{J_{4(2n+4}} \Biggr \}
\end{aligned}
\end{equation*}

\noindent and 

\begin{equation*}
\begin{aligned}
S_2 &:= \frac{j(q^{2n+8} x^2 y^2, q^{4(2n+4)}) j(-y/x, q^{4(2n+4)}) j(q^{3n+8} xy, q^{2(2n+4)})}{J_{2(2n+4)}^2} \\
& \cdot \Biggl \{ \frac{q^{n+1} j(-q^{2n+8} x^2 y^2, q^{4(2n+4)}) j(q^{2(2n+4)} y^2 / x^2, q^{4(2n+4)}) J_{8(2n+4)}}{y J_{4(2n+4)}} \\
& + \frac{qx j(-q^{6n+16} x^2 y^2, q^{4(2n+4)}) j(q^{4(2n+4)} y^2 / x^2, q^{8(2n+4)})^2}{J_{8(2n+4}} \Biggr \}.
\end{aligned}
\end{equation*}

\end{theorem}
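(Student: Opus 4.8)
Theorem \ref{hm3} is due to Hickerson and Mortenson, so the plan is to reconstruct their general Hecke\nobreakdash--to\nobreakdash--Appell--Lerch machinery in the special case $(a,b,c)=(n,n+4,n)$, $n$ odd. Write $D:=b^2-ac=(n+4)^2-n^2=8(n+2)>0$, so $f_{n,n+4,n}(x,y,q)$ is a genuine indefinite theta series in the sense of \eqref{fdef}. First I would return to the definition and dissect the cone $\{r,s\ge 0\}\cup\{r,s<0\}$ according to the residue of $s$ modulo $a=n$: writing $s=t+n\nu$ with $0\le t\le n-1$, the exponent $n\binom{r}{2}+(n+4)rs+n\binom{s}{2}$ splits so that the cross term $q^{(n+4)rs}$ becomes $q^{(n+4)rt}\cdot q^{n(n+4)r\nu}$. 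Summing $r$ over the half\nobreakdash-line forced by the sign of $\nu$ then produces, after pulling out the theta constant $j(q^{(n+4)t}x,q^n)$, an Appell--Lerch sum of the shape $m(\,\cdot\,,q^{nD},z_0)$; the symmetric dissection in $r$ modulo $c=n$ produces the $m(\,\cdot\,,q^{nD},z_1)$ pieces. The internal parameter $z$ in each $m$ encodes the freedom in how one completes a one\nobreakdash-sided sum to a full bilateral theta sum (via the ``changing $z$'' formula, $m(x,q,z_1)-m(x,q,z_0)$ being a theta quotient). Assembling the pieces gives the function $g_{n,n+4,n}(x,y,q,z_1,z_0)$ of \eqref{g}.

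The key point is that this dissection tiles the cone exactly only when $D$ is small: for $p=1$ there is no remainder, so $f_{n,n+1,n}=g_{n,n+1,n}$ as in Theorem \ref{hm1}, whereas for $p\ge 2$ the two families of one\nobreakdash-dimensional ray sums overlap on a bounded lattice region whose boundary consists of finitely many lattice lines. The contribution of this region is a finite combination of one\nobreakdash-dimensional theta sums, hence a weakly holomorphic modular form, and this is the origin of the correction term $\Theta_{n,p}$. Since $D=p(2n+p)$ roughly doubles as $p$ runs through $1,2,4$, so does the complexity of this overlap region, which is why $\Theta_{n,2}$ is a single theta quotient at modulus $4(n+1)=D$ while $\Theta_{n,4}$ lives at modulus $8(n+2)=D$ (and its divisors and small multiples) and splits as $J_{4n,16n}S_1-qJ_{8n,16n}S_2$ with each $S_i$ itself a sum of two theta quotients.

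The technical heart is the third step. After the dissection one has $f_{n,n+4,n}=g_{n,n+4,n}-(\text{an explicit but unwieldy finite sum of theta quotients at modulus }8(n+2))$, and this sum must be collapsed into the stated closed form for $\Theta_{n,4}$. I would do this by repeated use of the Jacobi triple product $j(x,q)=(x)_\infty(q/x)_\infty(q)_\infty$ together with its elementary corollaries $j(q^kx,q)=(-1)^kx^{-k}q^{-\binom{k}{2}}j(x,q)$ and $j(x,q)=j(q/x,q)$, the 2\nobreakdash-dissection of a theta into two theta functions of modulus $q^4$ by the parity of the summation index (this is what produces the quadratic\nobreakdash-in\nobreakdash-$x$ arguments such as $j(-q^{2n+8}x^2,q^{4(2n+4)})$ in the denominator), and the quintuple product identity. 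The hypothesis that $n$ is odd enters precisely here: it is what makes the relevant index shifts land in the correct residue classes, equivalently what keeps the exponents $(n-3)/2$ and $(n+1)/2$ on $x$ and $y$ integral, and it controls the sign pattern $J_{4n,16n}S_1-qJ_{8n,16n}S_2$. Finally the completion parameters are specialized to the symmetric values $z_0=x^n/y^n$ and $z_1=y^n/x^n$, which are compatible with the evident symmetry $f_{a,b,c}(x,y,q)=f_{c,b,a}(y,x,q)$ (so that the two sums in $g$ are genuinely mirror images) and keep every $m$\nobreakdash-argument generic.

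I expect this third step to be the main obstacle: for $p=4$ the leftover is a genuinely intricate combination of theta quotients, and recognizing it as the single expression $\Theta_{n,4}$ requires a non\nobreakdash-canonical chain of theta identities together with exact bookkeeping of every normalizing power of $q$, $x$ and $y$; this is the bulk of the Hickerson--Mortenson argument and the reason $\Theta_{n,4}$ looks so much more elaborate than $\Theta_{n,2}$. One could also attempt to bootstrap from Theorems \ref{hm1} and \ref{hm2} using a theta recursion that relates $f_{n,n+4,n}$ to Hecke sums with smaller cross coefficient, collecting the resulting theta terms; and a less constructive route would observe that $f_{n,n+4,n}-g_{n,n+4,n}$ is a mock Jacobi form whose mock part cancels, hence a weakly holomorphic Jacobi form of known weight and index to be pinned down in a finite\nobreakdash-dimensional space — but either way the decisive computation remains the same theta\nobreakdash-function identification of $\Theta_{n,4}$.
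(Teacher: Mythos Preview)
The paper does not prove this theorem at all: Theorem~\ref{hm3} is quoted verbatim from Hickerson and Mortenson \cite[Theorem~1.11]{Hi-Mo1} and used as a black box in the proofs of Theorems~\ref{mockthm2} and~\ref{mockthm3}. There is therefore no ``paper's own proof'' to compare your proposal against; the authors simply invoke the result and move on.

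Your sketch is a reasonable outline of the general Hickerson--Mortenson strategy (dissect the cone modulo $a$ and $c$, pull out Appell--Lerch pieces, and identify the leftover overlap region as a theta quotient), and you correctly anticipate that the $p=4$ case produces a substantially more intricate modular correction than $p=2$. But you should be aware that what you have written is a high-level plan rather than a proof: the decisive step --- collapsing the raw overlap sum into the specific closed form $\Theta_{n,4}$ with its particular factorization $J_{4n,16n}S_1 - qJ_{8n,16n}S_2$ --- is precisely the hard, non-canonical computation you flag as ``the main obstacle,'' and you have not actually carried it out. If the intent is to supply a self-contained proof here, that computation cannot be waved at; if the intent is merely to use the result, then citing \cite{Hi-Mo1} (as the paper does) is both sufficient and appropriate.
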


Finally, we use the fact that specializations of Appell-Lerch series are well-known to be mock theta functions \cite{Za1}, \cite[Ch. 1]{Zw1}.

To simplify expressions arising in Theorems \ref{hm1}--\ref{hm3}, we require certain facts about $j(x,q)$ and $m(x,q,z)$. From the definition of $j(x,q)$, we have

\begin{equation} \label{j1}
j(q^{n} x, q) = (-1)^{n} q^{-\binom{n}{2}} x^{-n} j(x,q)
\end{equation}

\noindent where $n \in \mathbb{Z}$ and

\begin{equation} \label{j2}
j(x,q) = j(q/x, q) = -x j(x^{-1}, q).
\end{equation}

Next, some relevant properties of the sum $m(x, q, z)$ are given in the following (see (3.2b), (3.2c) of Proposition 3.1, (3.3) of Corollary 3.2 and Theorem 3.3 in \cite{Hi-Mo1}). 

\begin{proposition} \label{mprops} For generic $x$, $z$, $z_0$, $z{'} \in \mathbb{C}^{*}$,

\begin{equation} \label{m1}
m(x,q,z)=x^{-1} m(x^{-1}, q, z^{-1}),
\end{equation}

\begin{equation} \label{m2}
m(qx, q, z)=1-xm(x,q,z),
\end{equation}

\begin{equation} \label{mprod}
m(q, q^2, -1) = \frac{1}{2}
\end{equation}

\noindent and

\begin{equation} \label{m3}
m(x, q, z) = m(x, q, z_0) + \frac{z_0 J_1^3 j(z / z_{0}, q) j(x z z_{0}, q)}{j(z_0, q) j(z, q) j(xz_0, q) j(xz, q)}.
\end{equation}
\end{proposition}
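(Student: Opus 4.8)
The plan is to observe first that all four identities already appear in \cite{Hi-Mo1}: \eqref{m1} and \eqref{m2} are (3.2b) and (3.2c) of their Proposition 3.1, \eqref{mprod} is (3.3) of their Corollary 3.2, and \eqref{m3} is their Theorem 3.3, so strictly speaking the proof is a citation. For completeness I would include the short arguments. Identities \eqref{m1} and \eqref{m2} are purely formal manipulations of the defining series \eqref{Appell-Lerch}. For \eqref{m2}, writing out $m(qx,q,z)$ and $m(x,q,z)$ and reindexing $r \mapsto r \pm 1$ in $\sum_{r} (-1)^r q^{\binom{r}{2}} z^r / (1 - q^{r-1} x z)$ (using $\binom{r-1}{2} = \binom{r}{2} - r + 1$) aligns the two families of denominators and produces the stated relation. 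For \eqref{m1}, the substitution $r \mapsto 1 - r$, together with $\binom{1-r}{2} = \binom{r}{2}$ and the functional equation $j(z^{-1},q) = -z^{-1} j(z,q)$ from \eqref{j2}, converts the series for $m(x,q,z)$ into the series for $x^{-1} m(x^{-1}, q, z^{-1})$.

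For \eqref{mprod} I would combine \eqref{m1} and \eqref{m2} at the relevant special values. Taking $x = q$, $z = -1$ and base $q^2$ in \eqref{m1} gives $m(q,q^2,-1) = q^{-1} m(q^{-1}, q^2, -1)$, while taking $x = q^{-1}$, $z = -1$ and base $q^2$ in \eqref{m2} gives $m(q,q^2,-1) = 1 - q^{-1} m(q^{-1}, q^2, -1)$. Eliminating $m(q^{-1},q^2,-1)$ between these two relations yields $2 m(q,q^2,-1) = 1$, which is \eqref{mprod}.

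The substantive identity is \eqref{m3}, the ``change of $z$'' formula, which I would prove following Hickerson and Mortenson. Fix $x$ and $z_0$ and set $D(z) := m(x,q,z) - m(x,q,z_0)$ and $R(z) := z_0 J_1^3 j(z/z_0,q) j(xzz_0,q) / \bigl( j(z_0,q) j(z,q) j(xz_0,q) j(xz,q) \bigr)$, both regarded as meromorphic functions of $z \in \mathbb{C}^{*}$. A reindexing of \eqref{Appell-Lerch} as in the first paragraph, using the $n=1$ case of \eqref{j1}, shows $m(x,q,qz) = m(x,q,z)$, so $D(qz) = D(z)$; and \eqref{j1} applied to each theta factor shows $R(qz) = R(z)$. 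Both $D$ and $R$ have at worst simple poles, located at $z \in q^{\mathbb{Z}}$ (from the factor $1/j(z,q)$ in $m$, resp.\ in $R$) and at $xz \in q^{\mathbb{Z}}$ (from the term $1 - q^{r-1}xz$ in $m$, resp.\ from $1/j(xz,q)$ in $R$). One then checks that the residues of $D$ and of $R$ agree at the two representatives $z = 1$ and $z = x^{-1}$ modulo $q^{\mathbb{Z}}$. Consequently $D - R$ is holomorphic on $\mathbb{C}^{*}$ and invariant under $z \mapsto qz$, hence descends to a holomorphic function on the compact torus $\mathbb{C}^{*}/q^{\mathbb{Z}}$ and is therefore constant. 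Evaluating at $z = z_0$, where $D(z_0) = 0$ and $R(z_0) = 0$ because $j(1,q) = 0$, forces this constant to be $0$, giving \eqref{m3}.

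I expect the main obstacle to be the residue bookkeeping in \eqref{m3}: one must verify that the simple pole of $m(x,q,z)$ at a representative of $xz \in q^{\mathbb{Z}}$ coming from $1 - q^{r-1}xz = 0$, and the pole at $z \in q^{\mathbb{Z}}$ coming from $1/j(z,q)$, carry exactly the residues of the theta quotient $R(z)$, so that $D - R$ is genuinely holomorphic rather than merely elliptic. Once that cancellation is in hand, the Liouville-type argument and the vanishing at $z = z_0$ finish the proof with no further difficulty, while \eqref{m1}, \eqref{m2} and \eqref{mprod} require only the elementary manipulations indicated above.
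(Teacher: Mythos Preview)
Your proposal is correct and in fact goes further than the paper. The paper does not prove Proposition~\ref{mprops} at all; it simply records the four identities with the parenthetical attribution ``(see (3.2b), (3.2c) of Proposition 3.1, (3.3) of Corollary 3.2 and Theorem 3.3 in \cite{Hi-Mo1})'' --- precisely the citations you list at the outset --- and then uses the proposition as a black box. Your supplementary arguments (the reindexing for \eqref{m1} and \eqref{m2}, the two-equation elimination for \eqref{mprod}, and the elliptic/Liouville argument for \eqref{m3}) are the standard proofs from \cite{Hi-Mo1} and are sound, but for the purposes of matching the paper a bare citation suffices.
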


We are now ready to proceed to the proofs of Theorems \ref{mockthm1}--\ref{mockthm3}.


\begin{proof}[Proof of Theorem \ref{mockthm1}]

Applying equations \eqref{alphaprimedef} and \eqref{betaprimedef} with $(a,\rho_1,\rho_2) = (1,-1,\infty)$ to (\ref{aevenslater1})--(\ref{bslater1}) and $(q,-q,\infty)$ to (\ref{aevenslater1q})--(\ref{bslater1q}) gives a Bailey pair relative to $1$, 

\begin{equation}  \label{a1n'even}
a'_{2n} = 2(1-q^{2n})q^{4n^2-n+1}\sum_{j=-n}^{n-1}q^{-2j^2}, 
\end{equation}

\begin{equation}  \label{a1n'odd}
a'_{2n+1} = -2(1-q^{2n+1})q^{4n^2+3n+1}\sum_{j=-n}^{n}q^{-2j^2-2j},
\end{equation}

\noindent and 

\begin{equation} \label{b1n'}
b'_n = \frac{1}{(-q)_n}\sum_{j=1}^n \frac{(-1)_j(-1)^jq^{\binom{j}{2}+1}}{(q)_{n-j}(q^2;q^2)_{j-1}(1-q^{2j-1})},
\end{equation}

\noindent and a Bailey pair relative to $q$,

\begin{equation}  \label{al1n'evenq}
\alpha'_{2n} = \frac{1}{1-q}\left(q^{4n^2+n}\sum_{j=-n}^{n}q^{-2j^2-2j} + q^{4n^2+3n+1}\sum_{j=-n}^{n-1}q^{-2j^2}\right), 
\end{equation}

\begin{equation} \label{a1n'oddq}
\alpha'_{2n+1} = -\frac{1}{1-q}\left(q^{4n^2+5n+2}\sum_{j=-n-1}^{n}q^{-2j^2} + q^{4n^2+7n+3}\sum_{j=-n}^{n}q^{-2j^2-2j}\right),
\end{equation}

\noindent and

\begin{equation} \label{be1n'}
\beta'_n = \frac{1}{(-q)_n}\sum_{j=0}^n \frac{(-1)^jq^{\binom{j}{2}}}{(q)_{n-j}(q)_{j}(1-q^{2j+1})},
\end{equation}

\noindent respectively. Now to prove \eqref{mock1-1}, we insert the Bailey pair $(a_n',b_n')$ from equations \eqref{a1n'even}--\eqref{b1n'} into \eqref{limitBailey} with $\rho_1$, $\rho_2 \to \infty$.   This gives 

\begin{equation*}
\begin{aligned}
q\mathcal{M}_{1}(q) & = \sum_{n \geq 0} q^{n^2} b_n'(q) = \frac{1}{(q)_{\infty}} \sum_{n \geq 0} q^{n^2} a_n'(q) \\
& = \frac{1}{(q)_{\infty}} \Biggl( \sum_{n \geq 0} q^{4n^2} a_{2n}'(q) + \sum_{n \geq 0} q^{4n^2 + 4n + 1} a_{2n+1}'(q) \Biggr) \\
& =  \frac{2}{(q)_{\infty}} \Biggl( \sum_{n \geq 0} q^{8n^2 - n+1} \sum_{j=-n}^{n-1} q^{-2j^2} - \sum_{n \geq 0} q^{8n^2 + n+1} \sum_{j=-n}^{n-1} q^{-2j^2} \\
& - \sum_{n \geq 0} q^{8n^2 + 7n + 2} \sum_{j=-n}^{n} q^{-2j^2 - 2j} + \sum_{n \geq 0} q^{8n^2 + 9n + 3} \sum_{j=-n}^{n} q^{-2j^2 - 2j} \Biggr). \\
\end{aligned}
\end{equation*}

\noindent After replacing $n$ with $-n$ in the second sum and $n$ with $-n-1$ in the fourth sum, we let $n=(r+s+1)/2$, $j=(r-s-1)/2$ in the first two sums and $n=(r+s)/2$, $j=(r-s)/2$ in the latter two sums to find

\begin{equation} \label{m1tof}
\begin{aligned}
\mathcal{M}_{1}(q)
& =  \frac{2q}{(q)_{\infty}} \Biggl( \Bigl(  \sum_{\substack{r, s \geq 0 \\ r \not\equiv s \imod{2} }} - \sum_{\substack{r, s < 0 \\ r \not\equiv {s \imod{2}} }} \Bigr)  q^{\frac{3}{2} r^2 + 5rs + \frac{9}{2} r + \frac{3}{2} s^2 + \frac{5}{2} s} \\
& - \Bigl(  \sum_{\substack{r, s \geq 0 \\ r \equiv s \imod{2} }} - \sum_{\substack{r, s < 0 \\ r \equiv {s \imod{2}} }} \Bigr) q^{\frac{3}{2} r^2 + 5rs + \frac{5}{2}r + \frac{3}{2}s^2 + \frac{9}{2}s} \Biggr) \\
& =  -\frac{2q}{(q)_{\infty}} \Biggl( \Bigl( \sum_{r,s \geq 0} - \sum_{r,s < 0} \Bigr) q^{\frac{3}{2} r^2 + 5rs + \frac{9}{2} r + \frac{3}{2} s^2 + \frac{5}{2} s} \Biggr)\\
& = -\frac{2q}{(q)_{\infty}} f_{3,5,3}(q^6, q^4, q).
\end{aligned}
\end{equation}

\noindent By (\ref{m1tof}), Theorem \ref{hm2} with $n=3$ and (\ref{g})--(\ref{j2}),

\begin{equation} \label{step1}
\begin{aligned}
\mathcal{M}_1(q)
& = -2q^{-10} m(-q^{-7}, q^{48}, q^6) + 2q^{-23} m(-q^{-23}, q^{48}, q^6) + 2m(-q^{25}, q^{48}, q^{-6}) \\
& - 2q^{-10} m(-q^{-7}, q^{48}, q^{-6}) + \frac{2q}{(q)_{\infty}} \Theta_{3,2}(q^6, q^4, q).
\end{aligned}
\end{equation} 

\noindent Now, we simplify (\ref{step1}) using (\ref{m1}), (\ref{m2}) and (\ref{m3}) to obtain

\begin{equation*}
\begin{aligned}
\mathcal{M}_1(q)
& = 4q^{-3} m(-q^7, q^{48}, q^{6}) + 4 m(-q^{25}, q^{48}, q^6) - 2 + \frac{2q}{(q)_{\infty}} \Theta_{3,2}(q^6, q^4, q) \\
& + 2q^{-3} \frac{J_{48}^3 J_{12,48} \overline{J}_{7,48}}{J_{6,48}^2 \overline{J}_{1,48} \overline{J}_{13,48}} + 2 \frac{J_{48}^3 J_{12,48} \overline{J}_{25,48}}{J_{6,48}^2 \overline{J}_{19,48} \overline{J}_{31,48}} \\
& = 4q^{-3} m(-q^7, q^{48}, q^{24}) + 4 m(-q^{25}, q^{48}, q^{-24}) - 4q^3\frac{J_{48}^3J_{18,48}\overline{J}_{11,48}}{J_{6,48}J_{24,48}\overline{J}_{13,48}\overline{J}_{17,48}} \\
& - 4\frac{J_{48}^3J_{18,48}\overline{J}_{7,48}}{J_{6,48}J_{24,48}\overline{J}_{1,48}\overline{J}_{17,48}} - 2 + \frac{2q}{(q)_{\infty}} \Theta_{3,2}(q^6, q^4, q) \\
& + 2q^{-3} \frac{J_{48}^3 J_{12,48} \overline{J}_{7,48}}{J_{6,48}^2 \overline{J}_{1,48} \overline{J}_{13,48}} + 2 \frac{J_{48}^3 J_{12,48} \overline{J}_{25,48}}{J_{6,48}^2 \overline{J}_{19,48} \overline{J}_{31,48}}.
\end{aligned}
\end{equation*}

\noindent Comparing with \eqref{mock1-1}, we are left with a modular identity to verify.  Such a verification can always be done using a finite computation.  This, and similar computations in this paper, were carried out using computer software packages available at

\begin{center}
\url{http://www.qseries.org/fgarvan}
\end{center}
This proves identity \eqref{mock1-1} and shows that $\mathcal{M}_{1}(q)$ is a mock theta function. 

As equations \eqref{mock1-2}--\eqref{mock1-5} are handled similarly, we briefly sketch the relevant details. For equations \eqref{mock1-2} and \eqref{mock1-3} we again use the Bailey pair \eqref{a1n'even}--\eqref{b1n'} in \eqref{limitBailey}, with $(\rho_1,\rho_2,q) = (\sqrt{q},-\sqrt{q},q)$ and $(q,\infty,q^2)$ and the above argument to obtain

\begin{equation*}
\mathcal{M}_{2}(q) =  \frac{(q;q^2)_{\infty}}{(q^2;q^2)_{\infty}}f_{1,3,1}(-q,-q^3,q)
\end{equation*}

\noindent and

\begin{equation*}
\mathcal{M}_{3}(q) =  \frac{2q(q;q^2)_{\infty}}{(q^2;q^2)_{\infty}}f_{1,2,1}(-q^5,-q^9,q^4).
\end{equation*}

\noindent One then proceeds with Theorems \ref{hm2} and \ref{hm1}, respectively, and simplifies. 




For \eqref{mock1-4} and \eqref{mock1-5} we use the Bailey pair \eqref{al1n'evenq}--\eqref{be1n'} in \eqref{limitBailey}, with $(\rho_1,\rho_2,q) = (\infty,\infty,q)$ and $(-q,\infty,q)$ to get

\begin{equation*}
\mathcal{M}_{4}(q) = \frac{1}{(q)_{\infty}}f_{3,5,3}(q^2,q^4,q)
\end{equation*}  	

\noindent and

\begin{equation*}
\mathcal{M}_{5}(q) = \frac{(-q)_{\infty}}{(q)_{\infty}} f_{1,2,1}(q,q^3,q^2).
\end{equation*}  	

\noindent Applying Theorems \ref{hm2} and \ref{hm1}, respectively, and continuing as above yields the result.
\end{proof}
	

\begin{proof}[Proof of Theorem \ref{mockthm2}]

Applying equations \eqref{alphaprimedef} and \eqref{betaprimedef} with $(a,\rho_1,\rho_2) = (1,\infty,\infty)$ to (\ref{aevenslater2})--(\ref{bslater2}) and $(q,\infty,\infty)$ to (\ref{aevenslater2q})--(\ref{bslater2q}) gives a Bailey pair relative to $1$, 

\begin{equation} \label{a2n'even}
a'_{2n} = (1-q^{4n})q^{6n^2-2n}\sum_{j=-n}^{n-1}q^{-4j^2-3j}, 
\end{equation}

\begin{equation} \label{a2n'odd}
a'_{2n+1} = -(1-q^{4n+2})q^{6n^2+4n+1}\sum_{j=-n}^{n}q^{-4j^2-j},
\end{equation}
and
\begin{equation} \label{b2n'}
b'_n = \sum_{j=1}^n \frac{(-1)^jq^{\binom{j+1}{2}}}{(q)_{n-j}(q)_{j-1}(1-q^{2j-1})},
\end{equation}
and a Bailey pair relative to $q$,

\begin{equation} \label{al2n'even}
\alpha'_{2n} = \frac{1}{1-q}\left(q^{6n^2+2n}\sum_{j=-n}^{n}q^{-4j^2-j} + q^{6n^2+4n}\sum_{j=-n}^{n-1}q^{-4j^2-3j}\right), 
\end{equation} 

\begin{equation} \label{al2n'odd}
\alpha'_{2n+1} = -\frac{1}{1-q}\left(q^{6n^2+8n+2}\sum_{j=-n-1}^{n}q^{-4j^2-3j} + q^{6n^2+10n+4}\sum_{j=-n}^{n}q^{-4j^2-j}\right),
\end{equation}
and
\begin{equation} \label{be2n'}
\beta'_n = \sum_{j=0}^n \frac{(-1)^jq^{\binom{j+1}{2}}}{(q)_{n-j}(q)_{j}(1-q^{2j+1})}, 
\end{equation}

\noindent respectively. For the identities \eqref{mock2-1}--\eqref{mock2-5} in Theorem \ref{mockthm2} we use the Bailey pair \eqref{a2n'even}--\eqref{b2n'} in \eqref{limitBailey} with $(\rho_1,\rho_2,q) = (\infty,\infty,q)$, $(-1,\infty,q)$, $(\sqrt{q},-\sqrt{q},q)$, $(q,\infty,q^2)$, and $(-1,-q,q^2)$ to obtain

\begin{equation*}
\mathcal{M}_6(q) =  -\frac{q^2}{(q)_{\infty}}f_{3,7,3}(q^5,q^6,q),
\end{equation*}

\begin{equation*}
\mathcal{M}_7(q) =  -\frac{2q^2(-q)_{\infty}}{(q)_{\infty}}f_{1,3,1}(q^4,q^5,q^2),
\end{equation*}
 
\begin{equation*}
 \mathcal{M}_8(q) = \frac{q(q;q^2)_{\infty}}{2(q^2;q^2)_{\infty}}f_{1,5,1}(-q^2,-q^3,q),
\end{equation*}
 
\begin{equation*}
 \mathcal{M}_9(q) = \frac{q^3(q;q^2)_{\infty}}{(q^2;q^2)_{\infty}}f_{1,3,1}(-q^7,-q^9,q^4),
 \end{equation*}
 
\begin{equation*}
 \mathcal{M}_{10}(q) = -\frac{2q^3(-q)_{\infty}}{(q)_{\infty}}f_{1,5,1}(q^5,q^7,q^2),
 \end{equation*}

\noindent respectively. One then applies Theorems \ref{hm2} and \ref{hm3} and proceeds as in the proof of (\ref{mock1-1}). The identities \eqref{mock2-6} and \eqref{mock2-7} follow similarly but with the Bailey pair \eqref{al2n'even}--\eqref{be2n'} in \eqref{limitBailey} with $(\rho_1,\rho_2,q) = (\infty,\infty,q)$ and $(-q,\infty,q)$ yielding

\begin{equation*}
\mathcal{M}_{11}(q) =  \frac{1}{(q)_{\infty}}f_{3,7,3}(q^3,q^4,q)
\end{equation*}

\noindent and

\begin{equation*}
\mathcal{M}_{12}(q) = \frac{(-q)_{\infty}}{(q)_{\infty}} f_{1,3,1}(q^2,q^3,q^2).
\end{equation*}

\noindent One now applies Theorems \ref{hm3} and \ref{hm2}, respectively, and simplifies.

\end{proof}


\begin{proof}[Proof of Theorem \ref{mockthm3}]

Applying \eqref{alphaprimedef} and \eqref{betaprimedef} with $(a,\rho_1,\rho_2) = (1,\infty,\infty)$ to (\ref{aevenslater3})--(\ref{bslater3})
and $(q,\infty,\infty)$ to (\ref{aevenslater3q})--(\ref{bslater3q}), we obtain a Bailey pair relative to $1$,

\begin{equation} \label{a3n'even}
a'_{2n} = (1-q^{4n})q^{6n^2-2n+1}\sum_{j=-n}^{n-1}q^{-4j^2-j}, 
\end{equation}

\begin{equation} \label{a3n'odd}
a'_{2n+1} = -(1-q^{4n+2})q^{6n^2+4n+1}\sum_{j=-n}^{n}q^{-4j^2-3j},
\end{equation}

\noindent and

\begin{equation} \label{b3n'}
b'_n = \sum_{j=1}^n \frac{(-1)^jq^{\binom{j}{2}+1}}{(q)_{n-j}(q)_{j-1}(1-q^{2j-1})},
\end{equation}

\noindent and a Bailey pair relative to $q$

\begin{equation}  \label{al3n'even}
\alpha'_{2n} = \frac{1}{1-q}\left(q^{6n^2+2n}\sum_{j=-n}^{n}q^{-4j^2-3j} + q^{6n^2+4n+1}\sum_{j=-n}^{n-1}q^{-4j^2-j}\right), 
\end{equation}

\begin{equation} \label{al3n'odd}
\alpha'_{2n+1} = -\frac{1}{1-q}\left(q^{6n^2+8n+3}\sum_{j=-n-1}^{n}q^{-4j^2-j} + q^{6n^2+10n+4}\sum_{j=-n}^{n}q^{-4j^2-3j}\right),
\end{equation}

\noindent and

\begin{equation} \label{be3n'}
\beta'_n = \sum_{j=0}^n \frac{(-1)^jq^{\binom{j}{2}}}{(q)_{n-j}(q)_{j}(1-q^{2j+1})}, 
\end{equation}

\noindent respectively. For the identities \eqref{mock3-1}--\eqref{mock3-5} in Theorem \ref{mockthm3} we use the Bailey pair \eqref{a3n'even}--\eqref{b3n'} in \eqref{limitBailey} with $(\rho_1,\rho_2,q) = (\infty,\infty,q)$, $(-1,\infty,q)$, $(\sqrt{q},-\sqrt{q},q)$, $(q,\infty,q^2)$, and $(-1,-q,q^2)$ to obtain

\begin{equation*}
\mathcal{M}_{13}(q) = -\frac{q}{(q)_{\infty}}f_{3,7,3}(q^4,q^7,q),
\end{equation*}

\begin{equation*}
\mathcal{M}_{14}(q) =  -\frac{2q(-q)_{\infty}}{(q)_{\infty}}f_{1,3,1}(q^3,q^6,q^2),
\end{equation*}

\begin{equation*}
\mathcal{M}_{15}(q) =  \frac{(q;q^2)_{\infty}}{2(q^2;q^2)_{\infty}}f_{1,5,1}(-q,-q^4,q),
\end{equation*}

\begin{equation*}
\mathcal{M}_{16}(q) =  \frac{q(q;q^2)_{\infty}}{(q^2;q^2)_{\infty}}f_{1,3,1}(-q^5,-q^{11},q^4),
\end{equation*}

\begin{equation*}
\mathcal{M}_{17}(q) = -\frac{2q(-q)_{\infty}}{(q)_{\infty}}f_{1,5,1}(q^3,q^9,q^2),
\end{equation*}

\noindent respectively. One then applies Theorems \ref{hm2} and \ref{hm3} and proceeds as above where (\ref{mprod}) is used for $\mathcal{M}_{16}(q)$. The identities \eqref{mock3-6} and \eqref{mock3-7} follow similarly but with the Bailey pair \eqref{al3n'even}--\eqref{be3n'} in \eqref{limitBailey} with $(\rho_1,\rho_2,q) = (\infty,\infty,q)$ and $(-q,\infty,q)$ yielding

\begin{equation*}
\mathcal{M}_{18}(q) = \frac{1}{(q)_{\infty}}f_{3,7,3}(q^2,q^5,q)
\end{equation*}

\noindent and

\begin{equation*}
\mathcal{M}_{19}(q) = \frac{(-q)_{\infty}}{(q)_{\infty}}f_{1,3,1}(q,q^4,q^2).
\end{equation*}

\noindent One applies Theorems \ref{hm3} and \ref{hm2}, respectively, and simplifies. 

\end{proof}


\begin{proof}[Proof of Corollary \ref{mockid}]

The identities in Corollary \ref{mockid} are established by comparing the expressions in Theorems \ref{mockthm1}--\ref{mockthm3} with those for classical mock theta functions.   We sketch the details. Equations (5.37) and (5.8) in \cite{Hi-Mo1} state 

$$
T_{0}(q)=-m(-q^3, q^8, q^2)
$$

\noindent and

$$
\omega(q) = -2q^{-1} m(q, q^6, q^2) + \frac{J_6^3}{J_2 J_{3,6}}.
$$

\noindent By (\ref{m1}), (\ref{m2}), (\ref{m3}), (\ref{mock1-2}) and (\ref{mock1-5}), (\ref{id1}) and (\ref{id2}) follow. Equations (5.1) and (5.40) in \cite{Hi-Mo1} state

$$
A(q)=-m(q,q^4,q^2)
$$

\noindent and 

$$
U_1(q)=-m(-q,q^4,-q^2).
$$

\noindent By (\ref{m3}), (\ref{mock2-4}) and (\ref{mock3-4}), (\ref{id3}) and (\ref{id4}) follow. 

\end{proof}

\section*{Acknowledgements} The authors thank Eric Mortenson for helping with the simplification of certain expressions in Theorems \ref{mockthm1}--\ref{mockthm3}. They also thank Dennis Stanton for information concerning Section 3.  The second author would like to thank the Institut des Hautes {\'E}tudes Scientifiques for their support during the completion of this paper.  This material is based upon work supported by the National Science Foundation under Grant No. 1002477.

\end{document}